\crefname{hypothesis}{Hypothesis}{Hypotheses}
\title{An Example Article\thanks{Submitted to the editors DATE.
\funding{This work was funded by the Fog Research Institute under contract no.~FRI-454.}}}
\DeclareMathOperator{\diag}{diag}
\xdef\csname bf\x \endcsname{\noexpand\ensuremath{\noexpand\mathbf{\x}}}
\xdef\csname bm\x \endcsname{\noexpand\ensuremath{\noexpand\boldsymbol{\x}}}
\xdef\csname bs\x \endcsname{\noexpand\ensuremath{\noexpand\boldsymbol{\x}}}
\xdef\csname bf\x \endcsname{\noexpand\ensuremath{\noexpand\mathbf{\x}}}
\xdef\csname bb\x \endcsname{\noexpand\ensuremath{\noexpand\mathbb{\x}}}
\xdef\csname ds\x \endcsname{\noexpand\ensuremath{\noexpand\mathds{\x}}}
\xdef\csname cal\x \endcsname{\noexpand\ensuremath{\noexpand\mathcal{\x}}}
\def\bmmu{\boldsymbol{\mu}}
\newcommand{\tens}[1]{\bm{\mathcal{#1}}} 
\newcommand{\tenselem}[1]{\mathcal{#1}}  
\newcommand{\bPhi}{\bm{\Phi}}
\newcommand{\bPsi}{\bm{\Psi}}
\newcommand{\bOm}{\bm{\Omega}}
\newcommand{\bbHl}{\bbH_{\scriptscriptstyle\rm L}}
\newcommand{\bbHr}{\bbH_{\scriptscriptstyle\rm R}}
\newcommand{\T}{{\sf T}}        
\renewcommand{\H}{{\sf H}}      
\newcommand{\col}[1]{\mathop{\operator@font col}\{#1\}}   
\DeclareMathOperator{\rank}{rank}
\DeclareMathOperator{\rrank}{rank_{\scriptscriptstyle \rm R}}
\DeclareMathOperator{\lrank}{rank_{\scriptscriptstyle \rm L}}
\DeclareMathOperator{\krank}{k}
\DeclareMathOperator{\rkrank}{k_{\scriptscriptstyle \rm R}}
\DeclareMathOperator{\lkrank}{k_{\scriptscriptstyle \rm L}}
\DeclareMathOperator{\kprank}{k'}
\DeclareMathOperator{\Span}{span}
\DeclareMathOperator{\rspan}{span_{\scriptscriptstyle \rm R}}
\DeclareMathOperator{\lspan}{span_{\scriptscriptstyle \rm L}}
\newcommand{\re}[1]{\mathop{\operator@font Re} \left\lbrace#1\right\rbrace}   
\newcommand{\vecop}[1]{\mathop{\operator@font vec} \{#1\}}  
\newcommand{\mini}[1]{\mathop{\operator@font min}\{#1\}}  
\newcommand{\maxi}[1]{\mathop{\operator@font max}\{#1\}}     
\newcommand{\trace}[1]{\mathop{\operator@font trace}\{#1\}}   
\newcommand{\Diag}[1]{\mathop{\operator@font Diag}\{#1\}}    
\newcommand{\argmin}{\mathop{\operator@font arg\,min}}
\newcommand{\offdiag}[1]{\mathop{\operator@font offdiag}\{#1\}}    
\newcommand{\Proj}[2]{\mathop{\operator@font Proj_{#1}}{#2}}
\newcommand{\ProjGrad}[2]{\mathop{{\operator@font Proj} \nabla}#1(#2)}
\def\real{\mathrm{Re}\,}
\def\imag{\mathrm{Im}\,}
\newcommand{\conj}[1]{\overline{#1}}
\newcommand{\bbCi}{\bbC_{\bmi}}
\newcommand{\lmul}{\cdot_{\scriptscriptstyle \rhd} }
\newcommand{\rmul}{\cdot_{\scriptscriptstyle\lhd}}
\newcommand{\nmod}{\times}
\newcommand{\lnmod}{\times^{\scriptscriptstyle\rm \rhd}}
\newcommand{\rnmod}{\times^{\scriptscriptstyle\rm \lhd}}
\newcommand{\krb}{\mathop{\odot}}   
\newcommand{\lkrb}{\mathop{\odot}_{\scriptscriptstyle \rhd}}
\newcommand{\rkrb}{\mathop{\odot}_{\scriptscriptstyle \lhd}}
\newcommand{\hadam}{\boxdot}    
\newcommand{\kron}{\mathop{\boxtimes}}   
\newcommand{\lkron}{\mathop{\boxtimes}_{\scriptscriptstyle \rhd}}   
\newcommand{\rkron}{\mathop{\boxtimes}_{\scriptscriptstyle \lhd}}   
\newcommand{\ladjC}{\chi_{\scriptscriptstyle\rhd}} 
\newcommand{\radjC}{\chi_{\scriptscriptstyle\lhd}} 
\newcommand{\ladjCp}{\ladjC^\pi}
\newcommand{\radjCp}{\radjC^\pi} 
\title{Multilinear analysis of quaternion arrays:\\theory and computation\thanks{Submitted to the editors on \today. \funding{this work was supported by CNRS Informatics through the TenQ project (2022-2023) and in part by the ANR grant RICOCHET
ANR-21-CE48-0013.} For the purpose of Open Access, a CC-BY public copyright licence has been applied by the authors to
the present document and will be applied to all subsequent versions up to the Author Accepted Manuscript arising from this
submission. All article content, except where otherwise noted, is licensed under a \href{https://creativecommons.org/licenses/by/4.0/}{Creative Commons Attribution (CC-BY) license.}}}
\author{Julien Flamant\thanks{Université de Lorraine, CNRS, CRAN, F-54000 Nancy, France} \and 
Xavier Luciani\thanks{Université de Toulon, Aix-Marseille Université, CNRS, LIS UMR 7020, France} \and Sebastian Miron\footnotemark[2] \and Yassine Zniyed\footnotemark[3]}
\begin{document}

\maketitle

\begin{abstract}
	Multidimensional quaternion arrays (often referred to as ``quaternion tensors") and their decompositions have recently gained increasing attention in various fields such as color and polarimetric imaging or video processing. Despite this growing interest, the theoretical development of quaternion tensors remains limited. This paper introduces a novel multilinear framework for quaternion arrays, which extends the classical tensor analysis to multidimensional quaternion data in a rigorous manner. Specifically, we propose a new definition of quaternion tensors as  $\bbH \bbR$-multilinear forms, addressing the challenges posed by the non-commutativity of quaternion multiplication. Within this framework, we establish the Tucker decomposition for quaternion tensors and develop a quaternion Canonical Polyadic Decomposition (Q-CPD). We thoroughly investigate the properties of the Q-CPD, including trivial ambiguities, complex equivalent models, and sufficient conditions for uniqueness. Additionally, we present two algorithms for computing the Q-CPD and demonstrate their effectiveness through numerical experiments. Our results provide a solid theoretical foundation for further research on quaternion tensor decompositions and offer new computational tools for practitioners working with quaternion multiway data.
\end{abstract}

\begin{keywords}
quaternions, multilinear algebra, low-rank tensor decomposition, quaternion CPD.
\end{keywords}

\begin{MSCcodes}
	15A69, 15A33.
\end{MSCcodes}

\begin{table}[]
    \centering
    \begin{tabular}{p{0.3\linewidth} | p{0.6\linewidth}}
        \toprule
        Notation & Description\\
        \midrule
        $\bbR, \bbC, \bbH$& real, complex fields and quaternion skew-field\\
        $\bbC_{\bmmu} = \bbR \oplus \bmmu \bbR$ & complex subfield of $\bbH$, with $\bmmu^2=-1$\\
        $a, \bfa, \bfA, \tens{A}$ & scalar, vector, matrix and tensor\\
        $\conj{a}, \conj{\bfa}, \conj{\bfA}$ & conjugation of a scalar / vector / matrix\\
        $\bfA^\T, \bfA^\H$& transpose and conjugate-transpose\\
        $q = q_a + \bmi q_b + \bmj q_c + \bmk q_d $& $q\in \bbH$, $q_a, q_b, q_c, q_d$ real components\\
		$q = \vert q \vert \exp(\bmmu_q \varphi_q)$&polar form of $q\in \bbH$\\
        $\bfA \lmul \bfB, \bfA \rmul \bfB$ & direct and reverse quaternion matrix product\\
        $\ladjC(\bfA), \radjC(\bfA)$ & direct and reverse complex adjoint\\
		$\ladjCp(\bfA), \radjCp(\bfA)$ & direct and reverse column-wise complex adjoint\\
        $\bfA\kron \bfB, \bfA\lkron \bfB, \bfA\rkron \bfB$ & Kronecker product, direct and reverse Kronecker product\\
        $\bfA \krb \bfB, \bfA \lkrb \bfB, \bfA \rkrb \bfB$ & Khatri-Rao product, direct and reverse Khatri-Rao product\\
        $\bfA \hadam \bfB$ & Hadamard product\\
        $\bfT_{i::}, \bfT_{:j:}, \bfT_{::k}$ & $i$-th horizontal slice, $j$-th lateral slice, $k$-th frontal slice of tensor $\tens{T}$\\
        $\bfT_{(n)}$ &$n$-mode unfolding of tensor $\tens{T}$\\
        $\tens{T}\lnmod_n \bfA, \tens{T}\nmod_n \bfA, \tens{T}\rnmod_n \bfA$ & $n$-mode products with possible direct or reverse quaternion matrix multiplication\\
        \bottomrule
    \end{tabular}
    \caption{Notations used in this article. The matrices $\bfA$ and $\bfB$ have appropriate dimensions. When at least one if real-valued, the $\rhd$ and $\lhd$ indices are irrelevant and they are dropped in notations.}
    \label{tab:notations}
\end{table}

\section{Introduction}

Quaternions are famous in computer science by their ability to efficiently encode three-dimensional (3D) rotations (which is crucial for e.g., computer graphics \cite{shoemake1985animating}). 
In addition, recent years have seen an increased interest in quaternion arrays, where quaternions are used to encode as a single scalar a 3D or 4D real vector information. 
Important examples include RGB color images \cite{barthelemy_color_2015,chen_low-rank_2020,zou_quaternion_2016,zou_quaternion_2021}, polarimetric images \cite{flamant_quaternion_2020,pan2023separable}, vector array processing \cite{chen_augmented_2020,hobiger_multicomponent_2012} or joint wind and temperature forecasting \cite{took_quaternion_2009}, to cite only a few. 
For a recent review on the usefulness of quaternion representations in data science, signal and image processing, see \cite{miron2023quaternions} and the recent special issue \cite{ieeeSPM_special_issue_quaternions1,ieeeSPM_special_issue_quaternions2}.

Quaternion multiway arrays arise naturally when 3D or 4D vector data is collected across two or more diversities, such as time, space, wavelength, database index, etc. 
Over the last decades, an important research effort has been devoted to the study of quaternion vectors and matrices, see e.g., \cite{zhang_quaternions_1997} for an introduction to quaternion linear algebra. 
On the other hand, the study of quaternion higher-order multiway arrays (i.e., with 3 dimensions or more) remains in its infancy. 
In particular, the notion of \emph{quaternion tensor} -- as a multilinear mathematical object -- remains to be properly established, despite being commonly employed as a synonym for multidimensional arrays of quaternions. 
Several tensor-like tools for quaternion multiway arrays have been recently proposed \cite{qin2022singular, miao2023quaternion, pan2024block} and used in color imaging applications \cite{yang2024quaternion,miao2024quaternion}.
These works essentially build on generalization of the tensor singular
value decomposition (t-SVD) \cite{kilmer2011factorization,kilmer2013third} to the quaternion domain. 
In fact, while in the real and complex cases it is known \cite{gilman2022grassmannian} that such decompositions can be interpreted as a specific multilinear tensor decomposition (such as the Canonical Polyadic decomposition), the lack of a multilinear definition of quaternion tensors makes it an interesting open question.

This paper deals with the study of quaternion multidimensional arrays from a multilinear point of view to meaningfully define the notion of \emph{quaternion tensors}.
The main difficulty lies in handling the non-commutativity of quaternion multiplication, which complicates the analysis and prevents straightforward extensions of the usual definitions from the real and complex cases. 
It builds on our preliminary results in \cite{imhogiemhe2023low}, where only partial results were presented.
The contributions of this paper are threefold: \emph{(i)} we propose a coordinate-free definition of quaternion tensors as $\bbH\bbR$-multilinear forms and identify a quaternion multiway array with the representation of a quaternion tensor in some fixed product basis; \emph{(ii)} we introduce the notion of quaternion Tucker format, $n$-mode products, change-of-basis and characterize their properties; \emph{(iii)} we develop a complete theory for quaternion Canonical Polyadic Decomposition (Q-CPD), including characterization of trivial ambiguities, equivalent complex-valued models and establish several sufficient uniqueness conditions. 
Finally, two practical algorithms for computing the Q-CPD are presented and their performance is validated through numerical experiments.
We believe the results of this paper can be of interest to practitioners dealing with quaternion multiway arrays, by providing a general framework for the development of new quaternion tensor decompositions and relevant models for applications that require uniqueness guarantees. 
In addition, the theoretical study presented in this paper relies extensively on the necessary distinction between \emph{left} and \emph{right} linear independence of quaternion vectors -- due to quaternion non-commutativity -- leading us to introduce several new results on left and right ranks and Kruskal ranks of quaternion matrices, which may be of independent interest for researchers in quaternion linear algebra. 

This paper is organized as follows. 
\Cref{sec:preliminary} recalls basics about the quaternion algebra and introduces our notations. 
We then extend some key concepts of real and complex multilinear algebra to the quaternion case. 
In particular we establish several new results on the Kruskal rank of quaternion matrices that are crucial for our theoretical analysis of quaternion tensor decompositions. 
Our main contributions are exposed in subsequent sections.
\Cref{sec:quaternion_tensor_def} establishes a formal definition of a quaternion tensor as a multilinear form with a specific structure.
This enables a natural derivation of the Tucker model for quaternion tensors. 
\Cref{sec:quaternionCPD} focuses on the Q-CPD and the study of its properties, equivalent complex representations and provides several sufficient uniqueness conditions. 
\Cref{sec:algo} develops two different algorithms for computing the Q-CPD, which are further evaluated through numerical experiments in \Cref{sec:numerical_simulations}. 
\Cref{sec:concluson} concludes the paper. 
\Cref{app:technical} gathers technical details and proofs.

\section{Preliminaries}
\label{sec:preliminary}
This section starts by reviewing elementary properties of the set of quaternions $\bbH$. 
We proceed by introducing the basic notions of quaternion linear algebra, including quaternion matrix operations, complex adjoint(s), and quaternion vector spaces.
The theory in this paper requires us to distinguish between \emph{direct} and \emph{reverse} quaternion matrix products; together with the notion of \emph{left} and \emph{right} linear independence of quaternion vectors, these notions lead to several new results that may be of independent interest. These are presented in \Cref{subsub:rank_krank_quat_mat}.
In particular, \Cref{prop:left_right_ranks_complex_adj} links the right and left ranks of a quaternion matrix to those of its direct and reverse complex adjoints; \cref{def:krank_quaternion} defines the notion of left and right Kruskal rank ($k$-rank) for quaternion matrices, and \cref{lemma:kprank_krank_quat} establishes a crucial link between these $k$-ranks of a quaternion matrix and the so-called $k'$-rank of its direct and reverse complex adjoint.
For reference, \Cref{tab:notations} summarizes the notations used in this article.

\subsection{Quaternions}

Let $\bbR$ and $\bbC$ denote the usual real and complex field, respectively. 
The set of quaternions $\bbH$ defines a 4-dimensional normed division algebra over the real numbers $\bbR$.
It has canonical basis $\lbrace 1, \bmi, \bmj, \bmk\rbrace$, where $\bmi, \bmj, \bmk$ are imaginary units such that
\begin{equation}
    \bmi^2 = \bmj^2 = \bmk^2 = \bmi\bmj\bmk = -1, \quad \bmi\bmj = - \bmj \bmi, \bmi\bmj = \bmk\:.
\end{equation}
Any quaternion $q \in \bbH$ can be written as
\begin{equation}
    q = q_a + \bmi q_b + \bmj q_c + \bmk q_d\:,
\end{equation}
where $q_a, q_b, q_c, q_d \in \bbR$ are called the components of $q$.
The real part of $q$ is $\real q = q_a$ whereas its imaginary part is $\imag q = \bmi q_b+ \bmj q_c + \bmk q_d$.
A quaternion $q$ is said to be purely imaginary (or simply, pure) if $\real q = 0$.
The quaternion conjugate of $q$ is denoted by $\conj{q} = \real q - \imag q$. 
The modulus of $q$ is $\vert q \vert = \sqrt{q\conj{q}} = \sqrt{\conj{q}q} = \sqrt{q_a^2 + q_b^2 + q_c^2+q_d^2}$.
Any non-zero quaternion $q$ has an inverse $q^{-1} = \conj{q}/ \vert q \vert^2$.
Just like complex numbers, a quaternion can be written in polar form as $q = \vert q \vert \exp(\bmmu_q \varphi_q)$, where its axis $\bmmu_q$ is a pure unit quaternion (i.e., such that $\bmmu_q^2 = -1)$) and where $\varphi_q \in \bbR$ is the angle of $q$.

One of the specific features of quaternions is the non-commutativity of the quaternion product.
Indeed, for $p, q \in \bbH$, one has $pq \neq qp$ in general since imaginary units do not commute. 
Nonetheless, standard operations such as conjugation and inversion distribute well over products of two quaternions: one has $\conj{(pq)} = \conj{q}\:\conj{p}$ and $(pq)^{-1} = q^{-1}p^{-1}$.

Given a pure unit quaternion $\bmmu$ (i.e., such that $\real \bmmu = 0$ and $\vert \bmmu \vert = 1$), one can define the set $\bbC_{\bmmu} = \bbR \oplus \bmmu \bbR$ which is a \emph{complex subfield} of $\bbH$. 
In particular, any quaternion $q$ can be written as a combination of two complex numbers in $\bbC_{\bmmu}$, a procedure known as the \emph{Cayley-Dickson decomposition} of $q$. 
For $\bmmu = \bmi$, it reads 
\begin{equation}
	q = q_1 + q_2\bmj, \quad q_1, q_2 \in \bbC_{\bmi}.\label{eq:CD_decomposition}
\end{equation}
The decomposition \eqref{eq:CD_decomposition} permits to establish a useful complex matrix representation of quaternions arrays \cite{zhang_quaternions_1997}, known as the complex matrix adjoint.
This will be investigated in detail in \Cref{subsub:complex_adjoints}.

\subsection{Quaternion linear algebra}
This section reviews important notions of linear algebra in the quaternion domain. For a comprehensive treatment of the subject, see e.g., \cite{zhang_quaternions_1997} or \cite{rodman2014topics}. 
In addition, we prove some new results that will be useful to study the uniqueness of quaternion tensor decompositions in \Cref*{sec:quaternionCPD}.

\subsubsection{Quaternion matrix operations}
Quaternion vectors and matrices  can be defined as 1-D and 2-D arrays with quaternion entries. 
The $i$-th entry of quaternion vector $\bfq \in \bbH^N$ is given by $(\bfq)_n = q_n \in \bbH$, while for a quaternion matrix $\bfA \in \bbH^{M\times N}$, its $(m, n)$-th entry reads $(\bfA)_{mn} = a_{mn} \in \bbH$.
The columns of $\bfA \in \bbH^{M\times N}$ are denoted by $\bfa_1, \bfa_2, \ldots, \bfa_N \in \bbH^M$. 
The transpose of $\bfA \in \bbH^{M\times N}$ is denoted by $\bfA^\T \in \bbH^{N\times M}$ and its conjugate-transpose is $\bfA^\H := \conj{(\bfA^\T)} = (\conj{\bfA})^\T \in \bbH^{N\times M}$.
Finally, we denote by $\bfI_M$ the identity matrix in $\bbH^{M\times M}$, which is the same as in $\bbC^{M\times M}$ and $\bbR^{M\times M}$.

Standard operations of linear algebra translate to the quaternion domain provided that operations take into account the non-commutativity of the quaternion product. 
Consider $\bfA \in \bbH^{M\times N}$ and $\bfB \in \bbH^{N\times P}$. Then it is possible to define the direct and reverse quaternion matrix products as 
\begin{align}
	 \text{direct quaternion matrix product}\quad & (\bfA \lmul \bfB)_{mp} := \sum_{n=1}^N a_{mn}b_{np} ,\label{eq:left_matrix_mul} \\
	 \text{reverse quaternion matrix product}\quad & (\bfA \rmul \bfB)_{mp} := \sum_{n=1}^N b_{np}a_{mn} \label{eq:right_matrix_mul}.
\end{align}
The direct or reverse denomination simply refers to the ordering of the two factors in the expression of the matrix product entries: direct refers to the left-to-right direction, while reverse means right-to-left direction.
Note that, in many publications, the direct matrix product is often implicitly used \cite{zhang_quaternions_1997}; however this distinction will appear crucial to our construction of quaternion tensors (see \Cref{sec:quaternion_tensor_def}). 

Due to quaternion non-commutativity, for two matrices with compatible sizes, $\bfA \lmul \bfB \neq \bfA \rmul \bfB$ in general; however, if one of the matrices has real-valued entries (say e.g., $\bfB \in \bbR^{N \times P}$), then $\bfA \lmul \bfB  = \bfA \rmul \bfB$ and we can drop the subscript to write $\bfA \bfB$ if there is no risk of confusion from the context.

Following \cite{schulz_using_2014}\footnote{In \cite{schulz_using_2014}, the authors refer to direct and reverse quaternion matrix products as ``left'' and ``right'', respectively. To avoid confusion with left and right quaternion vector spaces introduced in the next section, we chose this alternate denomination of quaternion matrix products.}, the direct and reverse quaternion matrix products satisfy the following properties: 
\begin{align*}
	\text{ transposition }\qquad  & (\bfA \lmul \bfB)^\T = \bfB^\T \rmul \bfA^\T &\text{ and }& (\bfA \rmul \bfB)^\T = \bfB^\T \lmul \bfA^\T  ,\\
	\text{ conjugation }\qquad  & \overline{(\bfA \lmul \bfB)} = \conj{\bfA} \rmul \conj{\bfB} &\text{ and }& \conj{(\bfA \rmul \bfB)} = \conj{\bfA} \lmul \conj{\bfB}  ,\\
	\text{ conjugate-transposition }\qquad  & (\bfA \lmul \bfB)^\H = \bfB^\H \lmul \bfA^\H &\text{ and }&(\bfA \rmul \bfB)^\H = \bfB^\H \rmul \bfA^\H .
\end{align*}
Proofs of these properties are readily obtained from the direct and reverse quaternion matrix product definitions.

The same need to distinguish between direct and reverse product applies to Kronecker and Khatri-Rao products, which are two standard matrix products encountered in tensor algebra. 
Given two quaternion matrices $\bfA \in \bbH^{M\times N}$ and $\bfB \in \bbH^{P\times Q}$, their direct and reverse Kronecker product are defined as: 
\begin{align*}
	\bfA \lkron \bfB := \begin{bmatrix}
		a_{11}\bfB & \ldots & a_{1N}\bfB  \\
		\vdots & \ddots & \vdots\\
		a_{M1}\bfB &  \ldots & a_{MN}\bfB
	\end{bmatrix} \in \bbH^{(MP)\times (NQ)},  \\ 
	\bfA \rkron \bfB := \begin{bmatrix}
		\bfB a_{11} & \ldots & \bfB a_{1N}\\
		\vdots & \ddots & \vdots\\
		\bfB a_{M1}&  \ldots & \bfB a_{MN}
	\end{bmatrix}\in \bbH^{(MP)\times (NQ)} .
\end{align*}
Similarly, given two matrices $\bfA \in \bbH^{M\times P}$ and $\bfB \in \bbH^{N\times P}$, their direct and reverse Khatri-Rao product (column-wise Kronecker product) are defined as 
\begin{align*}
	\bfA \lkrb \bfB := \begin{bmatrix}
	\bfa_1 \lkron \bfb_1 & \bfa_2 \lkron \bfb_2 & \ldots \bfa_P \lkron \bfb_P
	\end{bmatrix} \in \bbH^{(MN)\times P} ,\\ 
	\bfA \rkrb \bfB := \begin{bmatrix}
		\bfa_1 \rkron \bfb_1 & \bfa_2 \rkron \bfb_2 & \ldots \bfa_P \rkron \bfb_P
		\end{bmatrix} \in \bbH^{(MN)\times P} . 
\end{align*}
Finally, the Hadamard product between two quaternion matrices $\bfA, \bfB \in \bbH^{M\times N}$ is defined as 
\begin{equation}
	(\bfA \hadam \bfB)_{mn} = a_{mn} b_{mn} \; .
\end{equation}
which is naturally non-commutative, unlike its real and complex counterparts. Therefore, we do not distinguish between direct and reverse product since it simply corresponds to exchanging the positions of the two matrices $\bfA$ and $\bfB$. 

Finally, note that similarly to the quaternion matrix product case, the distinction between direct and reverse operations becomes irrelevant for Kronecker and Khatri-Rao products as long as one of the matrices is real-valued. In this case, we usually drop the subscript if there is no risk of confusion within the context. 
More properties on direct and reverse matrix operations can be found in \cite{schulz_using_2014}.

\subsubsection{Complex adjoints for quaternion matrix products}
\label{subsub:complex_adjoints}
The notion of complex adjoint of a quaternion matrix is a key tool in quaternion linear algebra \cite{zhang_quaternions_1997}.
In particular, it permits to establish the properties of quaternion matrices (rank, inversion, eigen-decompositions, etc.) from their complex adjoint counterpart. 
The definition of the complex adjoint is implicitly related to the choice of quaternion matrix multiplication, i.e., direct \eqref{eq:left_matrix_mul} or reverse \eqref{eq:right_matrix_mul}.
Therefore in full generality one needs to distinguish between \emph{direct} and \emph{reverse} adjoints.
Let $\bfA \in \bbH^{M\times N}$ and consider its Cayley-Dickson decomposition $\bfA = \bfA_1 + \bfA_2\bmj$, where $\bfA_1, \bfA_2$ are matrices in $\bbC^{2M\times 2N}_{\bmi}$. 
The \emph{direct} complex adjoint $\ladjC(\bfA)$ and \emph{reverse} complex adjoint $\radjC(\bfA)$ enable an isomorphism between quaternion matrices (equipped with the corresponding matrix product) and complex matrices with block structure, such that
\begin{align}
	\ladjC(\bfA) = \begin{bmatrix}
       \bfA_1 & \bfA_2\\
        -\conj{\bfA}_2& \conj{\bfA}_1 
    \end{bmatrix}
    \in \bbC^{2M\times 2N}_{\bmi},\label{eq:left_complexAdjointMatrix}\\
	\radjC(\bfA) = \begin{bmatrix}
		\bfA_1& -\conj{\bfA}_2\\
		\bfA_2 & \conj{\bfA}_1 
	\end{bmatrix}
	\in \bbC^{2M\times 2N}_{\bmi}.\label{eq:right_complexAdjointMatrix}
\end{align}
The only difference -- yet crucial -- between \eqref{eq:left_complexAdjointMatrix} and \eqref{eq:right_complexAdjointMatrix} resides in the placement of anti-diagonal blocks.
\Cref{prop:complexAdjointProperties} below summarizes important properties of these complex adjoints.

\begin{proposition}[Properties of the complex adjoint]
Let $\bfA$ and $\bfB$ be two quaternion matrices with arbitrary dimensions.
The direct \eqref{eq:left_complexAdjointMatrix} and reverse \eqref{eq:right_complexAdjointMatrix} complex adjoint satisfy the following properties:
\begin{itemize}
	\item $\ladjC(\bfI_M) = \radjC(\bfI_M) = \bfI_{2M}$;
	\item $\ladjC(\bfA + \bfB) = \ladjC(\bfA) + \ladjC(\bfB)$ and $\radjC(\bfA + \bfB) = \radjC(\bfA) + \radjC(\bfB)$ if $\bfA, \bfB$ have the same size;
	\item $\ladjC(\bfA \lmul \bfB) = \ladjC(\bfA)\ladjC(\bfB)$ and $\radjC(\bfA \rmul \bfB) = \radjC(\bfA)\radjC(\bfB)$ if one can form the matrix product between $\bfA$ and $\bfB$;
	\item $\ladjC(\bfA^\H) = \ladjC^\H(\bfA)$ and $\radjC(\bfA^\H) = \radjC^\H(\bfA)$;
	\item $\ladjC(\bfA) = \radjC^\top(\bfA^\top)$;
\end{itemize}
	\label{prop:complexAdjointProperties}
\end{proposition}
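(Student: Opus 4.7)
The plan is to verify each of the five properties by direct computation using the Cayley-Dickson decomposition $\bfA = \bfA_1 + \bfA_2\bmj$ with $\bfA_1,\bfA_2 \in \bbC_{\bmi}^{M\times N}$, together with the fundamental commutation rule $\bmj z = \conj{z}\bmj$, valid for every $z\in\bbC_{\bmi}$. The first two items are immediate: the identity $\bfI_M$ has Cayley-Dickson representation $\bfI_M + \mathbf{0}\bmj$, so plugging into \eqref{eq:left_complexAdjointMatrix}--\eqref{eq:right_complexAdjointMatrix} yields $\bfI_{2M}$ in both cases, while additivity inherits directly from the component-wise additivity of the Cayley-Dickson split. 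For the transposition identity $\ladjC(\bfA)=\radjC^\top(\bfA^\top)$, I observe that $\bfA^\top = \bfA_1^\top + \bfA_2^\top \bmj$ (transposing only reindexes entries and commutes with right-multiplication by $\bmj$); substituting this into the definition \eqref{eq:right_complexAdjointMatrix} and transposing the resulting $2\times 2$ block matrix produces exactly $\ladjC(\bfA)$ as given in \eqref{eq:left_complexAdjointMatrix}. The conjugate-transposition property is handled similarly once one determines the Cayley-Dickson blocks of $\bfA^\H$: from $\conj{pq}=\conj{q}\,\conj{p}$ together with the commutation rule one finds $\bfA^\H = \bfA_1^\H + (-\bfA_2^\top)\bmj$, after which $\ladjC(\bfA^\H)=\ladjC(\bfA)^\H$ (and its reverse counterpart) follow by a direct block comparison.

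The main work lies in the multiplicativity statement. For the direct adjoint I expand $\bfA\lmul\bfB = (\bfA_1+\bfA_2\bmj)(\bfB_1+\bfB_2\bmj)$ and apply $\bmj z = \conj{z}\bmj$ repeatedly to push every $\bmj$ to the right. The four cross-terms rearrange into the Cayley-Dickson form
\[
\bfA\lmul\bfB = (\bfA_1\bfB_1 - \bfA_2\conj{\bfB}_2) + (\bfA_1\bfB_2 + \bfA_2\conj{\bfB}_1)\bmj,
\]
which identifies the two Cayley-Dickson blocks of $\bfA\lmul\bfB$. A direct $2\times 2$ block-matrix expansion of $\ladjC(\bfA)\ladjC(\bfB)$ from \eqref{eq:left_complexAdjointMatrix} yields exactly these same top-row blocks, with the bottom-row blocks being their conjugates (the signs being those prescribed by the off-diagonal minus sign in \eqref{eq:left_complexAdjointMatrix}), proving the identity.

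For the reverse multiplicativity I would not redo the computation from scratch: instead, I combine the already-proven transposition identity with the relation $(\bfA\rmul\bfB)^\top = \bfB^\top \lmul \bfA^\top$ stated earlier in the excerpt to get
\[
\radjC(\bfA\rmul\bfB) = \bigl[\ladjC((\bfA\rmul\bfB)^\top)\bigr]^\top = \bigl[\ladjC(\bfB^\top)\ladjC(\bfA^\top)\bigr]^\top = \radjC(\bfA)\radjC(\bfB),
\]
where the first and last equalities invoke the transposition identity and the middle equality uses the direct multiplicativity just established. The main obstacle throughout is simply bookkeeping: every $\bmj$ crossing a complex scalar flips it to its conjugate, and the signs in the anti-diagonal blocks of $\ladjC$ and $\radjC$ ultimately originate from $\bmj^2=-1$ contributions. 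Once the commutation rule $\bmj z = \conj{z}\bmj$ is applied systematically, each verification reduces to a routine comparison of $2\times 2$ block matrices.
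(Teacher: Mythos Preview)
Your proposal is correct and is precisely the ``direct calculations'' that the paper alludes to (the paper gives no detailed argument, merely stating that the results follow by direct calculation and referring to \cite{zhang_quaternions_1997}). Your shortcut for the reverse multiplicativity---deriving it from the direct case via the already-established transposition identity $\radjC(\bfX)=\ladjC(\bfX^\top)^\top$ and the rule $(\bfA\rmul\bfB)^\top=\bfB^\top\lmul\bfA^\top$---is a clean way to avoid duplicating the block computation, and introduces no circularity since you verify the transposition identity independently.
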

The proof of these results follows by direct calculations. For additional properties, we refer the reader to \cite{zhang_quaternions_1997}, where the direct complex adjoint is considered - similar results hold for the reverse adjoint by simple adaption of the proofs therein.
\begin{remark}\label{remark:permuted_adjc}
	The standard definitions \eqref{eq:left_complexAdjointMatrix}--\eqref{eq:right_complexAdjointMatrix} of complex adjoints of a quaternion matrix can be related to the horizontal stack of the adjoint of each of its columns.
	Let $\bfA = [\bfa_1\: \bfa_2\:\ldots\:\bfa_N] \in \bbH^{M\times N}$, and introduce the  direct column-wise complex adjoint
	\begin{equation}
		\ladjCp(\bfA) = \begin{bmatrix}
			\ladjC(\bfa_1) \mid \ladjC(\bfa_2) \mid \ldots \mid\ladjC(\bfa_N)
		\end{bmatrix} \in \bbC^{2M\times 2N}_{\bmi}.\label{eq:definition_adjC_permuted}
	\end{equation}
	From the definition of the direct complex adjoint for a quaternion vector, it is easy to see that there exists a permutation matrix $\boldsymbol{\Pi} \in \bbR^{2N \times 2N}$ such that $\ladjC(\bfA) = \ladjCp(\bfA)\boldsymbol{\Pi}$. 
	Equation \eqref{eq:definition_adjC_permuted} also defines a uniform partition of  $\ladjCp(\bfA)$ in $N$ submatrices $\ladjC(\bfa_n) \in \bbCi^{2M\times 2}$.
	The same definition and properties holds for the columnwise reverse complex adjoint, which we denote by $\radjCp(\bfA)$. 
	This will be useful later on for the study of tensor decompositions of quaternion multiway arrays. 
\end{remark}

\subsubsection{Quaternion vector spaces}

The set of quaternions $\bbH$ is non-commutative and therefore it is not a field. 
As a result, since vector spaces are defined over a field (e.g., $\bbR$ or $\bbC$), it is not possible to define the notion of vector space over $\bbH$, \emph{per se}. 
Strictly speaking, one has to consider the notion of left- or right-module, which can be viewed as an extension of vector spaces where scalar multiplication is performed over a non-commutative ring.
However, in the case of quaternions, it is common practice \cite{rodman2014topics} to call left (resp. right) $\bbH$-modules left (resp. right) quaternion vector spaces, with little abuse. 
In particular, since $\bbH$ is a division algebra, quaternion vector spaces exhibit the usual properties of vector spaces (dimension, basis, linear maps, etc. ) up to the necessary distinction between left and right linearity.

For completeness, we give the definition of a left quaternion vector space below. The definition for a right vector space follows by direct adaption. 
\begin{definition}[Left quaternion vector space \cite{jamison1970extension}]
	A left vector space $\calH$ over $\bbH$ is an additive abelian group in which the operation of scalar multiplication by elements of $\bbH$ is defined. 
	Scalar multiplication is assumed to obey the following laws for all $\bfx, \bfy \in \calH$ and $q, p \in \bbH$: 
	\begin{itemize}
		\item $q(\bfx + \bfy) = q\bfx + q\bfy$;
		\item $(q+p)\bfx = q\bfx +p\bfx$;
		\item $(qp)\bfx = q(p\bfx)$;
		\item $1\cdot \bfx = \bfx$, where $1 \in \bbH$. 
	\end{itemize}
\end{definition}

A subscript ${}_{\scriptstyle  L}$ or ${}_{\scriptstyle  R}$ permits to distinguish between left and right quaternion vector spaces. 
Two important examples are: $\bbHl^M$, the left quaternion vector space of vectors in $\bbH^M$ and $\bbHr^M$, the right quaternion vector space of vectors in $\bbH^M$.

\subsubsection{The rank and Kruskal rank of quaternion matrices}
\label{subsub:rank_krank_quat_mat}

In this section we review the \emph{rank} of a quaternion matrix and extend the definition of the Kruskal rank, or simply $\krank$-rank, to quaternion matrices. 
In the real and complex cases, the rank of a matrix refers to the maximum number of \emph{linearly independent} columns (or rows) in the matrix.
When extending this notion to quaternion matrices, non-commutativity of quaternion multiplication  calls once again for a distinction between left and right linear independence. 
The rank of a quaternion matrix $\bfA$ is usually defined to be the maximum number of \emph{right} linearly independent columns \cite{zhang_quaternions_1997}.
However, as it will appear clearly in the sequel, the theory presented in this paper calls for a distinction between the \emph{left} and \emph{right} ranks (and thus, $\krank$-ranks) of a quaternion matrix. 
\begin{definition}[Left and right columns spans]
	Let $\bfA \in \bbH^{M\times N}$ and denote by $\bfa_1, \ldots, \bfa_N \in \bbH^M$ its columns. Then, the span of left-linear combination of columns of $\bfA$ is denoted as
	\begin{equation}
		\lspan(\bfA) := \left\lbrace \sum_{n=0}^N x_n \bfa_n \mid x_1, \ldots, x_n \in \bbH\right\rbrace,\label{eq:def:lspan}
	\end{equation}
	and the span of right-linear combinations of columns of $\bfA$ is given by
	\begin{equation}
		\rspan(\bfA) := \left\lbrace \sum_{n=0}^N \bfa_n x_n \mid x_1, \ldots, x_n \in \bbH\right\rbrace.\label{eq:def:rspan}
	\end{equation}
\end{definition}
From these definitions, one observes that $\lspan(\bfA)$ defines a left vector space over the columns of $\bfA$, while $\rspan(\bfA)$ defines a right vector space over the same columns. 

The following definition of left and right ranks of a quaternion matrix follows naturally. 

\begin{definition}[Left and right ranks of a quaternion matrix]
	Let $\bfA \in \bbH^{M\times N}$. The left rank of $\bfA$ is defined as $\lrank \bfA = \dim \lspan(\bfA)$ while the right rank is given by $\rrank \bfA = \dim \rspan(\bfA)$.
\end{definition}
\begin{remark}
	For a real or complex matrix $\bfA$, we drop the subscripts and keep the unambiguous notations $\Span(\bfA)$ to denote the span of its columns and its rank by $\rank\bfA$.
\end{remark}
As in the usual real and complex cases, the dimension of the left (resp. right) column span encodes the maximal number of left (resp. right) linearly independent vectors. 
For quaternion matrices, the left and right ranks are in general different; see e.g., \cite[Example 7.3]{zhang_quaternions_1997}.
Hence the distinction is critical. 
Nonetheless, the left and right ranks satisfy several important properties similar to their real and complex counterparts. 
In particular, for a quaternion matrix $\bfA\in \bbH^{M\times N}$, the rank cannot exceed its dimensions, i.e., $\lrank\bfA \leq \min (M, N)$ and $\rrank\bfA \leq \min (M, N)$. 
The matrix $\bfA$ is said to be full left rank if $\lrank \bfA = \min (M, N)$ and full right rank if $\rrank \bfA = \min (M, N)$. 

The following proposition relates the left and right ranks of a quaternion matrix to those of its direct and reverse complex adjoints. 
\begin{proposition}[Left and right ranks from complex adjoints]
	\label{prop:left_right_ranks_complex_adj}
	Let $\bfA \in \bbH^{M\times N}$ be a quaternion matrix.
	Then the left and right ranks of $\bfA$ can be obtained from direct and reverse complex adjoints as 
	\begin{enumerate}
	\item $\rank{\ladjC\left(\bfA\right)} = 2 \rrank \bfA$;
	\item $\rank{\radjC\left(\bfA\right)} = 2\lrank \bfA$.
	\end{enumerate}
\end{proposition}

\begin{proof}
	We only prove the first statement; the second follows from direct adaptation of the proof.
	Let $\bfA \in \bbH^{M\times N}$ and suppose that $\rrank \bfA = r \leq \min(M, N)$.\newline
	%
	Let us first prove that $\rank \ladjC(\bfA)\geq 2r$. 
	Since $\rrank \bfA = r$, this implies that there exist $r$ columns $\bfa_1, \ldots, \bfa_r$ of $\bfA$ which are right linearly independent. 
	Denote by $\bfA_r = [\bfa_1 \cdots \bfa_r] \in \bbH^{M\times r}$ this submatrix of $\bfA$.
	Now let $\boldsymbol{\beta} \in \bbCi^{2r}$ such that $\ladjC(\bfA_r)\boldsymbol{\beta} = 0$. 
	Partition $\boldsymbol{\beta} = [\boldsymbol{\beta}_1^\top\: -\overline{\boldsymbol{\beta}_2}^\top]^\top$ with $\boldsymbol{\beta}_1, \boldsymbol{\beta}_2 \in \bbCi^r$.
	The complex adjoint structure of $\ladjC(\bfA_r)$ implies that $\ladjC(\bfA_r)\boldsymbol{\beta}^\perp = 0$ as well, where $\boldsymbol{\beta}^\perp := [\boldsymbol{\beta}^\top_2\: \overline{\boldsymbol{\beta}_1}^\top]^\top$.
	Therefore, posing $\boldsymbol{\alpha} = \boldsymbol{\beta}_1 + \boldsymbol{\beta}_2 \bmj$ and observing that $[\boldsymbol{\beta}\: \boldsymbol{\beta}^\perp] = \ladjC(\boldsymbol{\alpha})$, one has $\ladjC(\bfA_r)\ladjC(\boldsymbol{\alpha}) = 0.$ 
	Using the properties of the direct adjoint
	we thus have $\ladjC(\bfA_r\lmul\boldsymbol{\alpha})=0$ and then $\bfA_r\lmul\boldsymbol{\alpha}=0.$
	Since we have supposed that the columns of $\bfA_r$ were linearly independent, we necessarily have $\boldsymbol{\alpha}=0$ hence $\boldsymbol{\beta}=0$. Therefore the $2r$ columns of $\ladjC(\bfA_r)$ are linearly independent over $\bbCi$.  Since $\ladjC(\bfA_r)$ is a submatrix of  $\ladjC(\bfA)$ we have $\rank \ladjC(\bfA) \geq 2r$.\newline
	%
	We now prove that $\rank \ladjC(\bfA) \leq 2r$. 
	Since $\rrank\bfA = r$, upon proper permutations of columns, one can factorize $\bfA = \bfA_r \lmul [\bfI_r \: \bfB]$, where $\bfA_r = [\bfa_1 \cdots \bfa_r] \in \bbH^{M\times r}$ has linearly independent columns and where $\bfB \in \bbH^{r\times (N-r)}$ encodes the right-linear combinations that yields the remaining $N-r$ columns of $\bfA$. 
	Applying the direct complex adjoint to this factorization, one gets
	$\ladjC(\bfA) = \ladjC(\bfA_r)\ladjC([\bfI_r \: \bfB])$. 
	This shows that $\ladjC(\bfA)$ can always be factorized as a product of a $2M\times 2r$ matrix by a $2r\times 2N$ matrix, hence $\rank \ladjC(\bfA) \leq 2r$. 
	Bringing the two parts together yields $\rank \ladjC(\bfA) = 2r = 2\rrank \bfA$.
\end{proof}

The definition of the left and and right Kruskal ranks follows naturally from that of the left and right ranks of a quaternion matrix. 

\begin{definition}[Left and right Kruskal ranks of a quaternion matrix]\label{def:krank_quaternion}
    Let $\bfA \in \bbH^{M\times N}$ be a quaternion matrix. Then:
	\begin{itemize}
		\item the left Kruskal rank of $\bfA$ is denoted by $\lkrank(\bfA)$. It is the maximal number $r$ such that any set of $r$ columns of $\bfA$ are \emph{left} linearly independent. 
		\item the right Kruskal rank of $\bfA$ is denoted by $\rkrank(\bfA)$. It is the maximal number $r$ such that any set of $r$ columns of $\bfA$ are \emph{right} linearly independent. 
	\end{itemize}
\end{definition}
For a real or complex matrix $\bfA$, we simply write $\krank(\bfA)$ to denote its usual Kruskal rank. For quaternion matrices, we use the shorthand  $\lkrank$-rank and $\rkrank$-rank to mean the left and right Kruskal ranks, respectively.

It is possible to relate the $\lkrank$-rank and $\rkrank$-rank of a quaternion matrix to the $\kprank$-rank of its direct and reverse column-wise complex adjoints (see definitions in \Cref{remark:permuted_adjc}).
Let us first recall the definition of the $\kprank$-rank of a partitioned matrix. 

\begin{definition}[$\kprank$-rank of a partitioned matrix \cite{de_lathauwer_decompositions_2008PartI}]
    Let $\bfX \in \bbF^{M\times N}$ be a real or complex partitioned matrix (not necessarily uniformly). 
    Then the $\kprank$-rank of $\bfX$, denoted by $\kprank(\bfX)$, is the maximal number $r$ such that any set of $r$ submatrices of $\bfX$ yields a set of linearly independent columns.
    \label{def:kp}
\end{definition}

The following lemma establishes the relation between the $\lkrank$-rank and $\rkrank$-rank of a quaternion matrix and the $\kprank$-rank of its column-wise complex adjoints. 
\begin{lemma}[$\kprank$-rank of column-wise adjoint matrices]\label{lemma:kprank_krank_quat}
Let $\bfA \in \bbH^{M\times N}$ and let $\ladjCp(\bfA) \in \bbCi^{2M\times 2N}$ and $\radjCp(\bfA) \in \bbCi^{2M\times 2N}$ be its column-wise direct complex adjoint and reverse complex adjoint, respectively, with the uniform partition introduced in \Cref{remark:permuted_adjc}.
Then $\rkrank(\bfA) = \kprank (\ladjCp(\bfA))$ and $\lkrank(\bfA) = \kprank(\radjCp(\bfA))$.
\end{lemma}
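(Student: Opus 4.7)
The plan is to reduce the statement to two applications of \Cref{prop:left_right_ranks_complex_adj} (one per equality), using the observation from \Cref{remark:permuted_adjc} that the column-wise complex adjoint differs from the standard complex adjoint only by a column permutation. Because $\kprank$ is invariant under column permutations within a partition block, and the rank of a matrix is invariant under any column permutation, the two results will follow by a careful bookkeeping on subsets of columns.

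First I would handle the equality $\rkrank(\bfA) = \kprank(\ladjCp(\bfA))$. Let $r \geq 1$ and fix any index subset $S = \{n_1, \ldots, n_r\} \subseteq \{1, \ldots, N\}$. Write $\bfA_S = [\bfa_{n_1}\,\ldots\,\bfa_{n_r}] \in \bbH^{M \times r}$ for the submatrix extracted from $\bfA$. By \Cref{def:krank_quaternion}, the columns of $\bfA_S$ are right linearly independent if and only if $\rrank(\bfA_S) = r$. By \Cref{prop:left_right_ranks_complex_adj}, this is equivalent to $\rank(\ladjC(\bfA_S)) = 2r$. Now, by \Cref{remark:permuted_adjc}, there exists a permutation matrix $\bPi_S \in \bbR^{2r \times 2r}$ such that $\ladjC(\bfA_S) = \ladjCp(\bfA_S)\bPi_S$, so $\rank(\ladjC(\bfA_S)) = \rank(\ladjCp(\bfA_S))$. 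Moreover, $\ladjCp(\bfA_S) = [\ladjC(\bfa_{n_1})\mid \ldots \mid \ladjC(\bfa_{n_r})]$ is exactly the horizontal concatenation of the $r$ submatrices of $\ladjCp(\bfA)$ indexed by $S$ in the uniform partition of \Cref{remark:permuted_adjc}. Since each block $\ladjC(\bfa_n)$ has two columns, stating that these $r$ blocks yield linearly independent columns (in the sense of \Cref{def:kp}) means precisely that $\rank(\ladjCp(\bfA_S)) = 2r$.

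Combining these equivalences, for every $r$ and every subset $S$ of size $r$, the columns $\bfa_{n_1}, \ldots, \bfa_{n_r}$ are right linearly independent if and only if the $r$ submatrices $\ladjC(\bfa_{n_1}), \ldots, \ladjC(\bfa_{n_r})$ of $\ladjCp(\bfA)$ yield linearly independent columns. Taking the maximal such $r$ (which, by definition, exists on both sides simultaneously) yields $\rkrank(\bfA) = \kprank(\ladjCp(\bfA))$.

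The second equality $\lkrank(\bfA) = \kprank(\radjCp(\bfA))$ is obtained by the symmetric argument: replace $\rrank, \rspan, \lmul, \ladjC, \ladjCp$ with $\lrank, \lspan, \rmul, \radjC, \radjCp$, and use the second item of \Cref{prop:left_right_ranks_complex_adj} together with the reverse column-wise adjoint version of \Cref{remark:permuted_adjc}. I expect no additional obstacle here beyond mirroring the notation.

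The main delicate point will be step (3) above: making rigorous the identification of the ``blockwise'' linear-independence in the sense of \Cref{def:kp} with the rank condition $\rank(\ladjCp(\bfA_S)) = 2r$. This requires noting that every nonzero column $\bfa_n$ gives a block $\ladjC(\bfa_n)$ of rank exactly $2$ (a fact established within the proof of \Cref{prop:left_right_ranks_complex_adj}), so the maximal rank attainable by concatenating $r$ blocks is $2r$, and is attained exactly when the stacked matrix has full column rank — which is the $\kprank$ condition. Once this identification is stated cleanly, both equalities follow immediately from the permutation equivalence and \Cref{prop:left_right_ranks_complex_adj}.
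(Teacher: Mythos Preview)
Your proposal is correct and follows essentially the same route as the paper: both reduce the statement to \Cref{prop:left_right_ranks_complex_adj} applied to column-submatrices $\bfA_S$, combined with the permutation relation $\ladjC(\bfA_S)=\ladjCp(\bfA_S)\bPi_S$ from \Cref{remark:permuted_adjc}. The only cosmetic difference is that the paper argues the two inequalities $\kprank(\ladjCp(\bfA))\geq \rkrank(\bfA)$ and $\kprank(\ladjCp(\bfA))\leq \rkrank(\bfA)$ separately (via $c$ and $c{+}1$ column subsets), whereas you package the same content as a single chain of equivalences parametrized by $S$; your ``delicate point'' about each nonzero block $\ladjC(\bfa_n)$ having rank~$2$ is in fact unnecessary, since ``$r$ submatrices yield linearly independent columns'' already means exactly $\rank(\ladjCp(\bfA_S))=2r$.
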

\begin{proof}
	We start by proving the result for the $\rkrank$-rank of $\bfA$.
	Let us assume that $\rkrank(\bfA)=c$. 
	According to the definition of the $\rkrank$-rank, any subset of $c$ columns of $\bfA$ forms a submatrix $\bfA_c$ of $\bfA$ such that $\rrank \bfA_c = c$.
	Using \Cref{prop:left_right_ranks_complex_adj}, we obtain $ \rank {\ladjC(\bfA_c)} =2c$. 
	This is equivalent to saying that any set of direct adjoint matrices associated with the $c$ columns of $\bfA_c$ yields a set of linearly independent columns of $\ladjCp(\bfA)$, which, by Definition \ref{def:kp} implies $\kprank(\ladjCp(\bfA)) \geq c=\rkrank(\bfA)$.
	
	Using once again the definition of the $\rkrank$-rank, there exists at least a subset of  $c+1$ columns of  $\bfA$, such that the  matrix ${\bfA_{c+1}}$ formed by these $c+1$ columns is right rank deficient, { i.e.,} $\rrank {\bfA_{c+1}} < c+1$, implying  $\rank {\ladjC(\bfA_{c+1})} < 2(c+1)$. Consequently, as $\ladjC(\bfA) = \ladjCp(\bfA)\boldsymbol{\Pi}$, there exists a set of $c+1$ submatrices of $\ladjCp(\bfA) $ that yields a set of linearly dependent columns, thereby completing the proof. 
    The second result can be proved in an analogous way, by replacing $\ladjC(\bfA)$ by $\radjC(\bfA)$ and right ranks by left ranks.
	\end{proof}

This result will be essential to the study of uniqueness conditions for the quaternion Canonical Polyadic Decomposition, see further in \Cref{sec:quaternionCPD}. 
\section{Quaternion tensors: construction and properties}
\label{sec:quaternion_tensor_def}
In the standard real and complex cases, three alternative definitions of \emph{tensors} are possible \cite{lim2021tensors}: tensors as objects obeying certain transformation rules, tensors as multilinear maps or tensors as elements of tensor product spaces. 
While the first definition has historical importance, it has now been superseded by the two other definitions \cite{comon2014tensors}.
Over the past few years, there has been an increasing interest in studying multidimensional arrays of quaternions \cite{qin2022singular, miao2023quaternion, pan2024block, yang2024quaternion,miao2024quaternion}; while the term \emph{quaternion tensor} is often used, a proper algebraic, coordinate-free definition remains to be established.

In this paper, we choose to construct quaternion tensors as quaternion-valued multilinear forms. 
Our definition of a quaternion tensor can be viewed as an instance of \cite[Definition 3.3]{lim2021tensors} specified to quaternion-valued forms satisfying sufficient multilinearity properties.
More precisely, we identify a tensor of order $D$ with a quaternion-valued multilinear form $f: \calS_1 \times \calS_2 \times \cdots \times \calS_D \rightarrow \bbH$ where $\lbrace \calS_d \rbrace_{d=1}^D$ are some carefully chosen vector spaces. 
This definition enables a natural generalization of the notion of tensors to the quaternion domain, while revealing the crucial caveats associated to the non-commutativity of quaternion multiplication.
In what follows, \Cref{sub:quaternion_multilinearity} discusses the issue of quaternion multilinearity, \Cref{sub:HRmultilinearFunctionals} establishes the definition of quaternion tensors as multilinear forms.
\Cref{sub:transformationRules} derives elementary transformations for quaternion-valued tensors, which are direct consequences of \Cref{def:quaternionTensor} below.
Finally, \Cref{sub:tucker_format} introduces the Tucker format of a quaternion tensor.

\subsection{The quaternion multilinearity issue}
\label{sub:quaternion_multilinearity}
The definition of quaternion multilinearity is cumbersome due to the non-commutativity of quaternion multiplication. 
To illustrate this, considering the three-dimensional case is sufficient. 
Let $\calS_1,  \calS_2, \calS_3$ be three left quaternion vector spaces and let $f: \calS_1 \times \calS_2 \times \calS_3 \rightarrow \bbH$ be a multilinear form. 
By definition, $f$ is \emph{left}-linear in each one of its arguments, i.e., for any $\alpha, \beta \in \bbH$ and for any $\bfx_d, \bfy_d \in \calS_d$ with $d = 1, 2, 3$, one has
\begin{align}
     f(\alpha \bfx_1 +\beta \bfy_1, \bfx_2, \bfx_3) = \alpha f(\bfx_1, \bfx_2, \bfx_3) + \beta f(\bfy_1, \bfx_2, \bfx_3)   \label{eq:linearity1},   \\
     f(\bfx_1, \alpha \bfx_2 +\beta \bfy_2, \bfx_3) = \alpha f(\bfx_1, \bfx_2, \bfx_3) + \beta f(\bfx_1, \bfy_2, \bfx_3)     \label{eq:linearity2}, \\
     f(\bfx_1, \bfx_2, \alpha \bfx_3 +\beta \bfy_3) = \alpha f(\bfx_1, \bfx_2, \bfx_3) + \beta f(\bfx_1, \bfx_2, \bfy_3) \label{eq:linearity3}.
\end{align}
However, when one has to evaluate quantities such as $f(\alpha \bfx_1, \beta \bfx_2, \bfx_3)$, one is faced with a disturbing issue: in which order should linearity properties \eqref{eq:linearity1} -- \eqref{eq:linearity3} be applied?
Starting with  \eqref{eq:linearity1} and then \eqref{eq:linearity2} gives $f(\alpha \bfx_1, \beta \bfx_2, \bfx_3) = \alpha\beta f(\bfx_1, \bfx_2, \bfx_3)$, while the reversed order yields $f(\alpha \bfx_1, \beta \bfx_2, \bfx_3) = \beta\alpha f(\bfx_1, \bfx_2, \bfx_3)$.
Since the product of two quaternions is non-commutative, $\alpha\beta \neq \beta\alpha$ in general and thus $\beta\alpha f(\bfx_1, \bfx_2, \bfx_3) \neq  \alpha \beta f(\bfx_1, \bfx_2, \bfx_3)$.
Any other choice of variables leads to similar contradictions. 
In other terms, without specifying a somewhat arbitrary ordering of the variables $\bfx_1, \bfx_2$ and $\bfx_3$ (or equivalently, of quaternion vector spaces $\calS_1, \calS_2$ and $\calS_3$), the definition of quaternion multilinearity is unpractical as it is. 

The issue is not limited to left quaternion vector spaces. 
Considering $\calS_1, \calS_2, \calS_3$ to be right quaternion vector spaces leads to the same conclusions.
Mixing left and right quaternion vector spaces does not solve the issue either.
To see this, suppose without loss of generality  that $\calS_1, \calS_2$ are left quaternion vector spaces, while $\calS_3$ is a right quaternion vector space. 
Quaternion multilinearity between the first and last variables is unambiguous: indeed, for any $\alpha, \beta \in \bbH$, one has $f(\alpha \bfx_1, \bfx_2, \bfx_3 \beta) = \alpha f(\bfx_1, \bfx_2, \bfx_3 \beta) = f(\alpha \bfx_1, \bfx_2, \bfx_3 )\beta = \alpha f(\bfx_1, \bfx_2, \bfx_3 )\beta$. 
However, the first two modes suffer from the previous issue since for any $\alpha, \beta \in \bbH$, $f(\alpha \bfx_1, \beta \bfx_2, \bfx_3)$ can be equal to $\alpha\beta f(\bfx_1, \bfx_2, \bfx_3)$ or $\beta \alpha f(\bfx_1, \bfx_2, \bfx_3)$, depending on the order in which linearity properties are applied.

In the next section, we circumvent the issue of general quaternion multilinearity by introducing a special class of multilinear quaternion-valued forms. 
By carefully specifying the nature of the vector spaces $\lbrace \calS_d \rbrace$, we can recover fundamental multilinearity properties, similar to those in the real and complex cases. This class forms the foundation for the definition of quaternion tensors in Section \ref{sub:HRmultilinearFunctionals} and the establishment of their properties.

\subsection{Quaternion tensors as $\bbH\bbR$-multilinear forms}
\label{sub:HRmultilinearFunctionals}

In order to extend relevant multilinearity-like properties to the quaternion linear form $f: \calS_1 \times \cdots \times \calS_D \rightarrow \bbH$, it is necessary to restrict the nature of the different vector spaces  $\lbrace \calS_d\rbrace$ defining the domain of $f$.

\begin{definition}[$\bbH\bbR$-multilinear form]\label{def:HRmultilinerity}
	Let $D \geq 3$ be an integer and let the vector spaces $\lbrace \calS_d\rbrace$ be such that: $\calS_1$ is a \emph{left} quaternion vector space; $\calS_d$, $1 < d < D$ are {\em real} vector spaces; and such that $\calS_D$ is a \emph{right} quaternion vector space. 
	Then $f:\calS_1 \times \cdots\times \calS_D \rightarrow \bbH$ is said to be $\bbH\bbR$-multilinear if
	\begin{itemize}
		\item $f$ is left-quaternion linear in its first argument, i.e.,
			  \begin{equation}
			 \forall \alpha, \beta \in \bbH, \forall \bfx_1, \bfy_1 \in \calS_1, f(\alpha \bfx_1 + \beta \bfy_1, \ldots) = \alpha f( \bfx_1, \ldots) + \beta f( \bfy_1, \ldots) ;\label{eq:HRealMultiLin1}
			  \end{equation}
			\item $f$ is real-linear in its $d$-th argument, $1 < d < D$, i.e.,
			  \begin{equation}
				  \begin{split}
				 \forall \alpha, \beta \in \bbR, \forall \bfx_d, \bfy_d \in \calS_d, & \\
				 f(\ldots, \alpha\bfx_d + \beta\bfy_d, \ldots) &= f(\ldots, \bfx_d\alpha +  \bfy_d\beta, \ldots) \\
		&= \alpha f(\ldots, \bfx_d, \ldots)  + \beta f(\ldots, \bfy_d, \ldots)\\
&=  f(\ldots, \bfx_d, \ldots)\alpha  + f(\ldots, \bfy_d, \ldots)\beta;
				  \end{split}
			  \end{equation}
		\item $f$ is right-quaternion linear in its last argument, i.e., :
			  \begin{equation}
				  \forall \alpha, \beta \in \bbH, \forall \bfx_D, \bfy_D \in \calS_D, f(\ldots, \bfx_D\alpha + \bfy_D \beta) = f(\ldots, \bfx_D)\alpha + f(\ldots, \bfy_D)\beta.\label{eq:HRealMultiLin3}
			  \end{equation}
	\end{itemize}
\end{definition}

\begin{remark}
	Several remarks are in order. 
	The term $\bbH\bbR$-multilinearity refers to the fact that conditions \eqref{eq:HRealMultiLin1}--\eqref{eq:HRealMultiLin3} mix between real and quaternion linearity properties.
	In addition, note that if a function is $\bbH\bbR$-multilinear, by definition it is also $\bbR$-multilinear.
	Finally, it is worth noting that the ordering of vector spaces in Definition \ref{def:HRmultilinerity} is completely arbitrary, and can be permuted if necessary. The only restriction lies in considering exactly one quaternion left vector space, one quaternion right vector space, and $D-2$ real-vector spaces for an arbitrary number of dimensions $D \geq 3$. 
\end{remark}

In the sequel, we restrict our attention to finite dimensional vector spaces $\lbrace \calS_d\rbrace$ with respective dimension $\dim \calS_d = N_d$. 
Therefore, without loss of generality, we can assume that $\calS_1 = \bbHl^{N_1}$, $\calS_d = \bbR^{N_d}$ for $1 < d < D$ and $\calS_D = \bbHr^{N_D}$. 
The next definition defines quaternion tensors as $\bbH\bbR$-multilinear forms. 

\begin{definition}[Quaternion tensor]
	\label{def:quaternionTensor}
	Let $f: \bbHl^{N_1} \times \bbR^{N_2} \times \cdots \times \bbR^{N_{D-1}}\times \bbHr^{N_D}\rightarrow \bbH$ be a $\bbH\bbR$-multilinear form. 
	Let $\lbrace \bfe^{(d)}_{i_d}\rbrace_{i_d=1:N_d}$ denote a basis of the vector space $\calS_d$ for $1 \leq d \leq D$. 
	Then the quaternion tensor of order $D$ associated to $f$ is the $D$-dimensional quaternion array $\tens{T} \in \bbH^{N_1 \times \ldots \times N_D}$ defined by its entries as 
	\begin{equation}
		\tenselem{T}_{i_1i_2\ldots i_D} = f\left(\bfe^{(1)}_{i_1}, \bfe^{(2)}_{i_2}, \ldots, \bfe^{(D)}_{i_D}\right)\label{eq:quaternionTensorDefcoeff}
	\end{equation}
	i.e., $\tens{T}$ is the representation of $f$ in the Cartesian product basis $\bigtimes_{d=1}^D\lbrace \bfe^{(d)}_{i_d}\rbrace_{i_d=1:N_d}$.
\end{definition}

\begin{remark}
	Following standard practice in multilinear algebra, we identify a tensor with its multidimensional array coordinate representation in a given basis. 
\end{remark}

\Cref{def:quaternionTensor} suggests that different choices of bases for $\lbrace \calS_d\rbrace$ lead to different multidimensional quaternion arrays representations of the same $\bbH\bbR$-multilinear form $f$. 
Therefore a natural question is how these representations are related to one another. 
Let $f: \bbHl^{N_1} \times \bbR^{N_2} \times \cdots \times \bbR^{N_{D-1}}\times \bbHr^{N_D}\rightarrow \bbH$ be a $\bbH\bbR$-multilinear form and consider two quaternion multidimensional array representations:
\begin{align}
	\text{``new'' representation $\tens{T} \in  \bbH^{N_1 \times \ldots \times N_D}$ in the basis $\bigtimes_{d=1}^D\lbrace \bfe^{(d)}_{i_d}\rbrace_{i_d=1:N_d}$},\\
	\text{``old'' representation $\tens{\tilde{T}} \in  \bbH^{N_1 \times \ldots \times N_D}$ in the basis $\bigtimes_{d=1}^D\lbrace \tilde{\bfe}^{(d)}_{j_d}\rbrace_{j_d=1:N_d}$}.
\end{align}
Expressing ``new'' basis vectors as a function of ``old'' ones one gets
\begin{align}
    \bfe_{i_1}^{(1)}  &= \sum_{j_1=1}^{N_1} a^{(1)}_{i_1j_1} \tilde{\bfe}_{j_1}^{(1)}, & a^{(1)}_{i_1j_1}\in \bbH,\quad &i_1 = 1,  \ldots,  N_1   \label{eq:changeBasisFirstMode} \\
    \bfe_{i_d}^{(d)}  &= \sum_{j_d=1}^{N_d} a^{(d)}_{i_dj_d} \tilde{\bfe}_{j_d}^{(d)}, & a^{(d)}_{i_dj_d} \in \bbR, \quad & i_d = 1, \ldots, N_d \text{ and } 1 < d < D   \\
    \bfe_{i_D}^{(D)}  &= \sum_{j_D=1}^{N_D} \tilde{\bfe}_{j_D}^{(d)}a^{(D)}_{i_Dj_D},  & a^{(D)}_{i_Dj_D}\in \bbH, \quad & i_D = 1, \ldots, N_D, \label{eq:changeBasisLasttMode}
\end{align}
on account of the nature of the vector spaces $\lbrace \calS_d\rbrace$. 
Plugging expressions \eqref{eq:changeBasisFirstMode} -- \eqref{eq:changeBasisLasttMode} into the definition \eqref{eq:quaternionTensorDefcoeff} of $\tens{T}$, one obtains the fundamental change-of-basis relation for quaternion tensors relating the entries of the tensor $\tens{T}$ to that of $\tens{\tilde{T}}$:
\begin{equation}
	\tenselem{T}_{i_1i_2\ldots i_D}  = \sum_{j_1, j_2, \ldots,  j_D=1}^{N_1, N_2, \ldots, N_D} a_{i_1j_1}^{(1)} a_{i_1j_1}^{(2)} \ldots a_{i_{D-1}j_{D-1}}^{(D-1)}\tilde{\tenselem{T}}_{j_1j_2\ldots j_D} a_{i_Dj_D}^{(D)}.\label{eq:basischangeExplicit}
\end{equation}
On the one hand, remark the particular position of terms $a_{i_1j_1}^{(1)}$ and $a_{i_Dj_D}^{(D)}$, to the left and to the right of $\tilde{\tenselem{T}}_{j_1j_2\ldots j_D}$, respectively, due to the left (resp. right) linear nature of the associated quaternion vector spaces. 
On the other hand, the position of central terms $a_{i_dj_d}^{(d)}$, $1 < d < D$, with respect to $\tilde{\tenselem{T}}_{j_1j_2\ldots j_D}$ has no importance since they are real-valued. 

\subsection{Transformations rules for quaternion tensors}
\label{sub:transformationRules}
The change-of-basis equation \eqref{eq:basischangeExplicit} is the first example of transformation rules for quaternion tensors. Such transformations rely on the $\bbH\bbR$-multilinear form representation of quaternion tensors (\Cref*{def:quaternionTensor}) and the linearity properties of the underlying vector spaces. 
Extending the change-of-basis rule to arbitrary linear transformations in each variable leads to the definition of the $n$-mode product, which needs to be carefully specified for quaternion tensors due to $\bbH\bbR$-multilinearity.
From now on, we fix a basis for the $D$ vector spaces $\calS_1, \calS_2, \ldots, \calS_D$ and identify tensors with $D$-dimensional array of numbers.

\begin{definition}[$n$-mode product for quaternion tensors]\label{def:nModeProductQuaternionTensor}
	Let $\tens{T} \in \bbH^{N_1\times \cdots \times N_D}$ be a quaternion tensor of order $D$. 
	The $n$-mode-product operation corresponds to multiplying the $n$-mode fibers of $\tens{T}$ by a matrix $\bfU$ of appropriate size and domain, depending on the considered mode. 
	Explicitly, 
	\begin{itemize}
		\item the 1-mode product is defined as the direct quaternion matrix product by $\bfU \in \bbH^{J\times N_1}$ such that 
			  \begin{equation}
				  \left(\tens{T}\lnmod_1 \bfU\right)_{ji_2\ldots i_D} = \sum_{i_1=1}^{N_1}u_{ji_1}\tenselem{T}_{i_1i_2\ldots i_D},
			  \end{equation}
		\item for $1 < d < D$, the $d$-mode product is defined as the real matrix product by $\bfU \in \bbR^{J\times N_d}$ such that
		\begin{equation}
				\left(\tens{T}\nmod_d \bfU\right)_{i_1\ldots j \ldots i_D} = \sum_{i_d=1}^{N_d}\tenselem{T}_{i_1\ldots i_d\ldots i_D}u_{ji_d},
		\end{equation}
		\item the $D$-mode product is defined as the reverse quaternion matrix product by $\bfU \in \bbH^{J\times N_D}$ such that
			  \begin{equation}
				  \left(\tens{T}\rnmod_D \bfU\right)_{i_1\ldots i_{D_1}j} = \sum_{i_D=1}^{N_D}\tenselem{T}_{i_1i_2\ldots i_D}u_{ji_D}.
			  \end{equation}
	\end{itemize}

\end{definition}

Just like for quaternion matrix multiplication, symbols ${\rhd}$ and ${\lhd}$  indicate the type of quaternion matrix multiplication involved in mode-products when relevant.
In addition, one can express $n$-mode products in terms of unfoldings of quaternion tensors and quaternion matrix multiplication operations. 
Following standard practice, we denote by $\bfT_{(d)} \in \bbH^{N_d \times \prod_{i\neq d}N_i}$ the $d$-th mode unfolding of a quaternion tensor $\tens{T} \in \bbH^{N_1\times \cdots N_D}$ and use the indexing convention of \cite{kolda2009tensor}. 
The $n$-mode product operations relate to matrix unfoldings as
\begin{align}
	\tens{T}' = \tens{T}\lnmod_1 \bfU &\Longleftrightarrow \bfT_{(1)}' = \bfU \lmul \bfT_{(1)},\\
	\tens{T}' = \tens{T}\nmod_d \bfU &\Longleftrightarrow \bfT_{(d)}' = \bfU \bfT_{(d)},\\
	\tens{T}' = \tens{T}\rnmod_D \bfU &\Longleftrightarrow \bfT_{(D)}' = \bfU \rmul \bfT_{(D)},
\end{align}
i.e., the matrix unfoldings inherit the direct and reverse order of the $n$-mode product operation.

\Cref*{prop:commutativity} and \Cref*{prop:combination} show that the \cref{def:nModeProductQuaternionTensor} of $n$-modes product permits to preserve the classical properties of successive $n$-mode products: (i) commutativity between distinct modes and (ii) matrix composition between identical modes.
These desirable properties are directly inherited from the definition a quaternion tensor as a $\bbH\bbR$-multilinear form. 
\begin{proposition}[Commutativity]\label{prop:commutativity}
    Let $\tens{T} \in \bbH^{N_1 \times N_2 \times \cdots \times N_D}$ be a quaternion tensor of order $D$. 
	Let $\bfU_1 \in \bbH^{J_1\times N_1}$ and $\bfU_D \in \bbH^{J_D\times N_D}$ be arbitrary quaternion matrices, and let $\bfU_d \in \bbR^{J_d\times N_d}$ and $\bfU_{d'} \in \bbR^{J_{d'}\times N_{d'}}$ be arbitrary real matrices with $d\neq d', 1 < d, d' < D$.
    The following properties are satisfied:
    \begin{align}
        \left(\tens{T}\lnmod_1 \bfU_1 \right)\rnmod_D \bfU_D & = \left(\tens{T}\rnmod_D \bfU_D \right)\lnmod_1 \bfU_1 , \\
        \left(\tens{T}\lnmod_1 \bfU_1\right) \nmod_d \bfU_d   & = \left(\tens{T}\times_d\bfU_d\right)\lnmod_1\bfU_1 ,     \\
        \left(\tens{T}\nmod_d \bfU_d\right) \rnmod_D \bfU_D   & = \left(\tens{T}\rnmod_D \bfU_D \right)\nmod_d\bfU_d,\\
		\left(\tens{T}\nmod_d \bfU_d\right) \nmod_{d'} \bfU_{d'}   & = \left(\tens{T}\nmod_{d'} \bfU_{d'} \right)\nmod_d\bfU_d.
    \end{align}
\end{proposition}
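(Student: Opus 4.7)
The plan is to prove each of the four identities by computing entries of both sides and reducing to appropriate commutations between scalars in $\bbH$ and $\bbR$. The unifying observation is that the definition of the $n$-mode product (\Cref{def:nModeProductQuaternionTensor}) places each matrix entry on the correct side of $\tenselem{T}$: the 1-mode factor always sits to the \emph{left} of the tensor entry, the $D$-mode factor always sits to the \emph{right}, and middle-mode factors are real. Thus the proof reduces to checking that (i) left-of and right-of placements never collide and (ii) real scalars commute freely with quaternions.

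First I would treat identity~1, which is the only case involving two genuine quaternion matrices. Expanding the left-hand side:
\begin{equation*}
	\bigl((\tens{T}\lnmod_1 \bfU_1)\rnmod_D \bfU_D\bigr)_{j_1 i_2\ldots i_{D-1}j_D} = \sum_{i_1,i_D} (u_1)_{j_1 i_1}\,\tenselem{T}_{i_1 i_2\ldots i_D}\,(u_D)_{j_D i_D},
\end{equation*}
and expanding the right-hand side yields the same triple expression. The key point to emphasize is that no reordering of quaternion factors is needed: $(u_1)_{j_1 i_1}$ lies to the left of $\tenselem{T}$ and $(u_D)_{j_D i_D}$ lies to the right, so the two summations may be freely interchanged, and non-commutativity of $\bbH$ never intervenes. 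This is precisely the design reason for combining the direct product $\lmul$ with the reverse product $\rmul$ in \Cref{def:nModeProductQuaternionTensor}.

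Next I would handle identities~2 and~3 together. For identity~2, entrywise computation gives
\begin{equation*}
\bigl((\tens{T}\lnmod_1 \bfU_1)\nmod_d \bfU_d\bigr)_{j_1\ldots j_d\ldots i_D} = \sum_{i_1,i_d}(u_1)_{j_1 i_1}\,\tenselem{T}_{i_1\ldots i_d\ldots i_D}\,(u_d)_{j_d i_d},
\end{equation*}
and similarly for the right-hand side. Since $(u_d)_{j_d i_d}\in\bbR$, it commutes with any quaternion, so it may be moved past $\tenselem{T}$ to match whichever position arises when the other order is taken. Identity~3 is treated by the symmetric argument, exchanging the roles of the first and last modes. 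Identity~4 is the easiest: both factors are real, so the triple sum is commutative in every factor, and the two orderings coincide without further comment.

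The only real obstacle is bookkeeping — making sure to keep track of which factor must sit on which side of $\tenselem{T}$ to stay consistent with the direct/reverse conventions — but there is no deeper mathematical difficulty. An alternative, more conceptual route would be to invoke \Cref{def:quaternionTensor} directly and observe that a mode-$n$ product corresponds to post-composing $f$ with a linear map acting on the $n$-th factor vector space alone, so distinct modes manipulate independent arguments of an $\bbH\bbR$-multilinear form and therefore commute by construction. I would mention this viewpoint briefly after the entrywise verification to highlight that commutativity is ultimately inherited from the coordinate-free definition of quaternion tensors.
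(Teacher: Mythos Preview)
Your proposal is correct and matches the paper's approach: the paper simply states that the proofs of \Cref{prop:commutativity} and \Cref{prop:combination} ``are found by direct calculations,'' which is exactly the entrywise verification you carry out. Your additional conceptual remark about $\bbH\bbR$-multilinearity is a nice complement but not required.
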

\begin{proposition}[Composition]\label{prop:combination}
	Let $\tens{T} \in \bbH^{N_1 \times N_2 \times \cdots \times N_D}$ be a quaternion tensor of order $D$.
	Let $\bfU$ and $\bfV$ be matrices with adequate dimensions and values. 
    The following properties are satisfied:
	\begin{align}
		\tens{T}\lnmod_1 \bfU \lnmod_1 \bfV & =  \tens{T}\lnmod_1 (\bfV\lmul \bfU) , \\
		\tens{T}\rnmod_D \bfU \rnmod_D \bfV & =  \tens{T}\rnmod_D (\bfV\rmul \bfU) , \\
		\tens{T}\times_d \bfU \times_d \bfV     & = \tens{T}\times_d (\bfV\bfU), \quad 1 < d < D.
	\end{align}
\end{proposition}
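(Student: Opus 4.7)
The plan is to prove each of the three identities by unfolding both sides to scalar expressions using \Cref{def:nModeProductQuaternionTensor}, and verifying that after interchanging the two summations, the accumulated coefficient matches exactly the $(k,i_1)$ (respectively $(k,i_D)$ or $(k,i_d)$) entry of the composed matrix on the right-hand side, with the composition being $\lmul$, $\rmul$, or ordinary real product as appropriate. The key conceptual point is that the definitions of direct and reverse quaternion matrix products in \eqref{eq:left_matrix_mul}--\eqref{eq:right_matrix_mul} were specifically tailored so that the scalar orderings arising from successive $n$-mode products collapse into the correct composed product; thus the proposition is essentially a bookkeeping verification.

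For the first identity, I would let $\bfW := \tens{T}\lnmod_1 \bfU$ and compute
\[
  (\bfW\lnmod_1 \bfV)_{k i_2 \ldots i_D}
  = \sum_{j} v_{kj}\, \tenselem{W}_{j i_2 \ldots i_D}
  = \sum_{j} v_{kj} \sum_{i_1} u_{j i_1}\, \tenselem{T}_{i_1 i_2 \ldots i_D}.
\]
Swapping the order of summation, the inner factor becomes $\sum_j v_{kj} u_{j i_1}$, which is by definition the $(k, i_1)$ entry of $\bfV \lmul \bfU$. This is exactly $(\tens{T}\lnmod_1 (\bfV\lmul \bfU))_{k i_2 \ldots i_D}$, proving the first identity. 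Note the crucial fact that the scalars $v_{kj}$ and $u_{j i_1}$ remain in the natural left-to-right order inherited from the definition of the $1$-mode product, which is precisely why the \emph{direct} product $\lmul$ appears rather than the reverse.

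The second identity is handled analogously but requires care, since this is the step where the ordering differs from the intuition of the real/complex case: it is the \emph{main obstacle} (mild, but noteworthy). Setting $\bfW := \tens{T}\rnmod_D \bfU$, one obtains
\[
  (\bfW\rnmod_D \bfV)_{i_1 \ldots i_{D-1} k}
  = \sum_{j} \tenselem{W}_{i_1 \ldots i_{D-1} j}\, v_{kj}
  = \sum_{i_D} \tenselem{T}_{i_1 \ldots i_D} \sum_{j} u_{j i_D}\, v_{kj}.
\]
The inner sum $\sum_j u_{j i_D}\, v_{kj}$ is precisely $(\bfV \rmul \bfU)_{k i_D}$ by definition \eqref{eq:right_matrix_mul}, so the right-hand side equals $(\tens{T}\rnmod_D (\bfV\rmul \bfU))_{i_1 \ldots i_{D-1} k}$. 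The appearance of the reverse product $\rmul$ here, rather than the direct one, is forced by the right-quaternion-linear structure of the last mode: the scalars $u$ and $v$ end up to the right of the tensor entry in reverse order, matching exactly the definition of $\rmul$.

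The third identity, concerning an intermediate real mode, is immediate: the entries $u_{j i_d}, v_{k j} \in \bbR$ commute freely with each other and with quaternion entries, so the standard matrix-composition calculation applies verbatim and yields $(\bfV\bfU)_{k i_d}$ as the coefficient, with no distinction between direct and reverse products. Together, the three calculations establish the proposition, and no further argument is needed beyond ensuring that each computed coefficient matches the appropriate product on the right-hand side.
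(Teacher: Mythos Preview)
Your proof is correct and follows exactly the approach indicated in the paper, which simply states that the result is obtained ``by direct calculations.'' Your elementwise expansion via \Cref{def:nModeProductQuaternionTensor}, together with the verification that the accumulated scalar coefficients match the entries of $\bfV\lmul\bfU$, $\bfV\rmul\bfU$, and $\bfV\bfU$ respectively, is precisely that direct calculation spelled out in full.
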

Proofs of \Cref*{prop:commutativity} and \Cref*{prop:combination} are found by direct calculations.
\begin{remark}[Change-of-basis]
	The change of basis equation \eqref{eq:basischangeExplicit} can be rewritten as successive $n$-mode products. Let $\bfA^{(1)} \in \bbH^{N_1 \times N_1}, \bfA^{(2)} \in \bbR^{N_2\times N_2}, \ldots,  \bfA^{(D-1)} \in \bbR^{N_{D-1}\times N_{D-1}},  \bfA^{(D)} \in \bbH^{N_D\times N_D}$ the change-of-basis matrices. Then the change-of-basis equation can be rewritten as 
	\begin{equation}
		\tens{T} = \tens{\tilde{T}} \lnmod_1 \bfA^{(1)} \nmod_2 \bfA^{(2)} \cdots \nmod_{D-1} \bfA^{(D-1)}\rnmod_D  \bfA^{(D)},\label{eq:changeofbasisNmode}
	\end{equation}
	where $n$-mode products can be performed in any order according to \Cref{prop:commutativity}.
\end{remark}
\subsection{Tucker format of quaternion tensors}
\label{sub:tucker_format}
Without loss of generality, in the remainder of the paper we focus on the case of third order tensors. 
The next definition defines the Tucker format for quaternion tensors.

\begin{definition}[Tucker format]
Let $\tens{T} \in \bbH^{N_1 \times N_2\times N_3}$ be a third-order quaternion tensor. The tensor $\tens{T}$ is said to be expressed in Tucker format if there exists a core tensor $\tens{S} \in \bbH^{F_1\times F_2\times F_3}$ and matrices $\bfA \in \bbH^{N_1\times F_1}, \bfB \in \bbR^{N_2\times F_2}$ and $\bfC \in \bbH^{N_3\times F_3}$ such that 
\begin{equation}
	\tens{T} = \tens{S} \lnmod_1 \bfA \nmod_2 \bfB \rnmod_3 \bfC.
 \label{eq:QTucker}
\end{equation}
The Tucker format of $\tens{T}$ is written in short-hand notation as $\tens{T} =\llbracket\tens{S}; \bfA, \bfB, \bfC\rrbracket$.
\end{definition}
Direct calculations show that the entries of a tensor $\tens{T} = \llbracket\tens{S}; \bfA, \bfB, \bfC\rrbracket$ can be written in terms of the entries of the core tensor $\tens{S}$ and factor matrices $\bfA, \bfB, \bfC$ as 
\begin{equation}
	\tenselem{T}_{i_1i_2i_3} = \sum_{f_1= 1}^{F_1}\sum_{f_2= 1}^{F_2}\sum_{f_3= 1}^{F_3} A_{i_1f_1}B_{i_2f_2}S_{f_1f_2f_3}C_{i_3f_3},\label{eq:tuckerFormat_elementwise}
\end{equation}
for $i_1 = 1, \ldots, N_1, i_2 = 1, \ldots, N_2$ and $i_3 = 1, \ldots, N_3$. Note that due to quaternion non-commutativity, elements in \eqref{eq:tuckerFormat_elementwise} do not commute except for the entries of $\bfB$.
For tensors written in Tucker format it is also possible to derive expressions of their mode-unfoldings. 
Following the unfolding convention of Kolda and Bader \cite{kolda2009tensor}, if  $\tens{T} =\llbracket\tens{S}; \bfA, \bfB, \bfC\rrbracket$ then its matrix unfoldings along the first and last mode read
\begin{align}
	\bfT_{(1)} & = \bfA \lmul \bfS_{(1)}\lmul \left(\bfC \kron \bfB\right)^\top,\label{eq:unfoldingTucker_1}\\
	\bfT_{(3)} & = \bfC \rmul \bfS_{(3)}\rmul \left(\bfB \kron \bfA\right)^\top.\label{eq:unfoldingTucker_3}
\end{align}
However, due to quaternion non-commutativity, the unfolding along the central mode can only be expressed in a simple way when the core tensor is real-valued, i.e.,
\begin{equation}
	\bfT_{(2)} = \bfB \bfS_{(2)} \left(\bfC \rkron \bfA\right)^\top, \quad \text{when }\tens{S}\text{ is real-valued.}\label{eq:unfoldingTucker_2_real}
\end{equation}

\section{Quaternion Canonical Polyadic Decomposition}
\label{sec:quaternionCPD}

\subsection{Definition}
\label{sub:defQuaternionCPD}
	If in  \eqref{eq:QTucker} the tensor $\tens{S}$ is diagonal,  it enables the derivation of the quaternion equivalent of a well-known tensor model, the Canonical Polyadic Decomposition (CPD). By absorbing the diagonal entries of $\tens{S}$ in the factor matrices $\bfA, \bfB, \bfC$, we define the Quaternion Canonical Polyadic Decomposition (Q-CPD) as follows.

 \begin{definition}[Quaternion Canonical Polyadic Decomposition (Q-CPD)]
The Q-CPD of a  third-order quaternion tensor  $\tens{T} \in \bbH^{N_1 \times N_2\times N_3}$ is given element-wise by : 
\begin{equation}
	\tenselem{T}_{i_1i_2i_3} = \sum_{f= 1}^{F} A_{i_1f}B_{i_2f}C_{i_3f},\label{eq:CPDformat_elementwise}
\end{equation}
for $i_1 = 1, \ldots, N_1, i_2 = 1, \ldots, N_2$ and $i_3 = 1, \ldots, N_3$. The smallest value of $F$ for which  equality  \eqref{eq:CPDformat_elementwise} holds exactly is called the rank of $\tens{T}$.
By expressing the elements of the decomposition in matrix format,  the Q-CPD of $\tens{T}$ can be succinctly expressed  as $\tens{T} =\llbracket \bfA, \bfB, \bfC\rrbracket$, where  $\bfA \in \bbH^{N_1\times F}, \bfB \in \bbR^{N_2\times F}$,  $\bfC \in \bbH^{N_3\times F}.$ 
\end{definition}
\begin{remark}
	Contrary to quaternion matrices, the rank of a quaternion tensor is unambiguous as defined and thus the left or right distinction does not apply.
\end{remark}
Notation for the Q-CPD $\tens{T} =\llbracket \bfA, \bfB, \bfC\rrbracket$ refers to the fact that the Q-CPD can be interpreted as a Tucker model in which the core tensor can always be chosen to be the identity tensor. In particular, this enables a direct derivation of the unfoldings of the model; see Eqs. \eqref{eq:unfold1}-\eqref{eq:unfold3} below.
Moreover, similarly to the Tucker model, elements in \eqref{eq:CPDformat_elementwise} do not commute except for the entries of $\bfB$. First, let us analyze the trivial ambiguities of the Q-CPD.

Let $\tens{T} =\llbracket \bfA, \bfB, \bfC\rrbracket$ follow a CPD of rank $F$. 
The first trivial ambiguity corresponds to an arbitrary joint permutation of the columns of the factor matrices, i.e., $\tens{T} = \llbracket \bfA, \bfB, \bfC\rrbracket = \llbracket \bfA\boldsymbol{\Pi},  \bfB\boldsymbol{\Pi}, \bfC\boldsymbol{\Pi}\rrbracket$ with $\boldsymbol{\Pi} \in \bbR^{F\times F}$ an arbitrary permutation matrix. 
The second trivial ambiguity relates to scaling between factors: unlike the real and complex CPD, one has to distinguish between \emph{real} and \emph{quaternion} scaling ambiguities due to factor matrices taking values in different spaces. 
Define three diagonal scaling matrices $\boldsymbol{\Lambda}_{\bfA}, \boldsymbol{\Lambda}_{\bfC} \in \bbH^{F\times F}$ and $\boldsymbol{\Lambda}_{\bfB} \in \bbR^{F\times F}$ (with non-zero diagonal entries).
Direct calculations show that $\tens{T} = \llbracket \bfA, \bfB, \bfC\rrbracket = \llbracket \bfA\lmul\boldsymbol{\Lambda}_{\bfA},  \bfB\boldsymbol{\Lambda}_{\bfB}, \bfC\rmul \boldsymbol{\Lambda}_{\bfC}\rrbracket$ holds for arbitrary factor matrices $\bfA, \bfB, \bfC$ if and only if $(\boldsymbol{\Lambda}_{\bfA} \lmul \boldsymbol{\Lambda}_{\bfC}) \boldsymbol{\Lambda}_{\bfB} = \bfI_F$, where $\bfI_F$ is the $F\times F$ identity matrix. 
By exploiting the polar form of scaling coefficients further interpretation is possible. 
\begin{proposition}[Interpretation of scaling ambiguities]
	Consider three diagonal matrices  $\boldsymbol{\Lambda}_{\bfA} = \diag(\alpha_1, \ldots, \alpha_F) \in \bbH^{F\times F}$, $\boldsymbol{\Lambda}_{\bfB} = \diag(\beta_1, \ldots, \beta_F) \in \bbR^{F\times F}$ and $\boldsymbol{\Lambda}_{\bfC} = \diag(\gamma_1, \ldots, \gamma_F) \in \bbH^{F\times F}$. 
	Write quaternion entries in polar form such that $\alpha_f = \vert \alpha_f \vert \exp(\bmmu_{\alpha_f}\varphi_{\alpha_f})$ and $\gamma_f = \vert \gamma_f \vert \exp(\bmmu_{\gamma_f}\varphi_{\gamma_f})$ where $\vert \cdot \vert$ denotes the modulus and $(\bmmu, \varphi) $ the axis and angle of the quaternion, respectively. 
	Then, the scaling condition $(\boldsymbol{\Lambda}_{\bfA} \lmul \boldsymbol{\Lambda}_{\bfC}) \boldsymbol{\Lambda}_{\bfB} = \bfI_F$ can be equivalently interpreted as the combination of two ambiguities:
	\begin{itemize}
		\item a modulus scaling ambiguity: $\vert \alpha_f \vert\, \vert \beta_f\vert\,\vert \gamma_f\vert =1$, $f=1, \ldots, F$
		\item a angular scaling ambiguity: for $f=1, \ldots, F$, if \( \beta_f \) is positive, it implies there is a choice of \( \bmmu_{\alpha_f}, \varphi_{\alpha_f} \) and \( \bmmu_{\gamma_f}, \varphi_{\gamma_f} \), such that $\bmmu_{\alpha_f} = \bmmu_{\gamma_f}$ and $\varphi_{\alpha_f} + \varphi_{\gamma_f} = 0$.
        In the case \( \beta_f \) is negative, it implies there is a choice of \( \bmmu_{\alpha_f}, \varphi_{\alpha_f} \) and \( \bmmu_{\gamma_f}, \varphi_{\gamma_f} \), such that $\bmmu_{\alpha_f} = \bmmu_{\gamma_f}$ and $\varphi_{\alpha_f} + \varphi_{\gamma_f} = \pi$.
	\end{itemize}
	i.e., moduli must multiply to 1 and arguments of quaternion factors compensate each other according to the sign of $\beta_f$. 
\end{proposition}
\begin{proof}
	Starting from $(\boldsymbol{\Lambda}_{\bfA} \lmul \boldsymbol{\Lambda}_{\bfC}) \boldsymbol{\Lambda}_{\bfB} = \bfI_F$ we get $\alpha_f \beta_f \gamma_f =1$ for every $f=1, \ldots, F$. 
	Using polar forms, the latter reads $\vert \alpha_f \vert \exp(\bmmu_{\alpha_f}\varphi_{\alpha_f}) \beta_f \vert\gamma_f \vert \exp(\bmmu_{\gamma_f}\varphi_{\gamma_f}) = 1$. 
	Taking the modulus of this expression one gets $\vert \alpha_f \vert\, \vert \beta_f\vert\,\vert \gamma_f\vert =1$. 
	For the argument,  we have $\exp(\bmmu_{\alpha_f}\varphi_{\alpha_f}) \exp(\bmmu_{\gamma_f}\varphi_{\gamma_f}) = \frac{ \beta_f }{\vert  \beta_f \vert}$. By considering separately the cases $\beta_f > 0$ and $\beta_f < 0$ one obtains the desired property. 
\end{proof}

These ambiguities play a crucial role to define the uniqueness of the Q-CPD; see \cref*{sub:uniquenessQuaternionCPD} for more details. 
Finally, note that handling the Q-CPD trivial ambiguities does not pose any challenge as long as non-commutativity of quaternion multiplication is taken into account.

The matrix unfoldings of the quaternion CPD model follow directly from the unfoldings \eqref{eq:unfoldingTucker_1} -- \eqref{eq:unfoldingTucker_2_real} of the Tucker model.
As already mentioned, the Q-CPD can be viewed as a special case of the Tucker model with a core identity tensor; therefore the unfoldings of $\tens{T} =\llbracket \bfA, \bfB, \bfC\rrbracket$ read
\begin{align}
	\bfT_{(1)} & = \bfA \lmul \left(\bfC \krb \bfB\right)^\top, \label{eq:unfold1}\\
	\bfT_{(2)} &= \bfB \left(\bfC \rkrb \bfA\right)^\top,\label{eq:unfold2}\\
	\bfT_{(3)} & = \bfC \rmul \left(\bfB \krb \bfA\right)^\top.\label{eq:unfold3}
\end{align}
Once again, these matrix unfoldings are similar to those of the real and complex cases (see, e.g., \cite{kolda2009tensor}) yet matrix and Khatri-Rao products have direct or reverse orderings due to non-commutativity of the quaternion product. 
Moreover, these unfoldings are essential to the practical computation of the factor matrices $\bfA, \bfB$ and $\bfC$; see \Cref{sec:algo} for further details.

To conclude this section, note that it is also possible to derive mathematical expressions for the matrix slices of the Q-CPD \eqref{eq:CPDformat_elementwise}. 
The frontal slices $\bfT_{::i_3} \in \bbH^{N_1\times N_2}$, lateral slices $\bfT_{:i_2:} \in \bbH^{N_1\times N_3}$ and horizontal slices $\bfT_{i_1::} \in \bbH^{N_2\times N_3}$ of $\tens{T} =\llbracket \bfA, \bfB, \bfC\rrbracket$ can be written in terms of factor matrices as
\begin{align}
	\bfT_{i_1::} &= \bfB\left(\bfD^{(i_1)}_{\bfA} \lmul \bfC^\T\right), \quad \bfD^{(i_1)}_{\bfA} := \diag(\bfA_{i_1:}) \in \bbH^{F\times F}, \quad i_1=1, \ldots, N_1 \label{eq:horizontalSliceQCPD},\\
	\bfT_{:i_2:} &=\left(\bfA \bfD^{(i_2)}_{\bfB} \right)\lmul \bfC^\T, \quad \bfD^{(i_2)}_{\bfB} := \diag(\bfB_{i_2:}) \in \bbR^{F\times F}, \quad i_2=1, \ldots, N_2 \label{eq:lateralSliceQCPD},\\
	\bfT_{::i_3} &= \left(\bfA  \lmul \bfD^{(i_3)}_{\bfC} \right) \bfB^\T, \quad \bfD^{(i_3)}_{\bfC} := \diag(\bfC_{i_3:}) \in \bbH^{F\times F}, \quad i_3=1, \ldots, N_3 \label{eq:frontalSliceQCPD}.
\end{align}
These expressions can be deduced from the definition \eqref{eq:CPDformat_elementwise} of the Q-CPD.
They will be useful in the next section to study equivalent models of the Q-CPD. 

\subsection{Equivalent models}
\label{sub:equivModels_quaternionCPD}
In this subsection, we propose different ways to rewrite the 
Q-CPD as structured or constrained decompositions in the complex domain.
Two main approaches are investigated. 
The first one models complex adjoints of lateral slices of $\tens{T}$ while the second one models separately the two Cayley-Dickson parts of of $\tens{T}$. 
The former will allow us to establish uniqueness properties of the Q-CPD in  \Cref{sub:uniquenessQuaternionCPD}. 
The latter will be useful in \Cref{sec:algo} to derive a simple complex-domain Q-CPD algorithm.

\subsubsection{Complex adjoint representation}
\label{subsub:complexAdjointRep_quatCPD}

Consider a third-order tensor $\tens{T} \in \bbH^{N_1\times N_2 \times N_3}$ following a rank-$F$ quaternion CPD
such that $\tens{T} =\llbracket \bfA, \bfB, \bfC\rrbracket$.
Recall that the lateral slices of $\tens{T}$ are given by \eqref{eq:lateralSliceQCPD} and let us write $\bfA = \bfA_1 + \bfA_2 \bmj$ and $\bfC = \bfC_1 + \bfC_2 \bmj$, where $\bfA_1, \bfA_2 \in \bbC^{N_1\times F}_{\bmi}$ and $\bfC_1, \bfC_2 \in \bbC^{N_3\times F}_{\bmi}$.
Then, the direct complex adjoint of the $i_2$-th lateral slice \eqref{eq:lateralSliceQCPD} is given by
\begin{equation}
	\begin{split}
	\ladjC(\bfT_{:i_2:}) &= \ladjC\left((\bfA \lmul \bfD^{(i_2)}_{\bfB}) \lmul \bfC^\T\right)\\
	&=\ladjC(\bfA)\ladjC(\bfD^{(i_2)}_{\bfB})\ladjC(\bfC^\T)\\
	&= \begin{bmatrix}
		\bfA_1 & \bfA_2\\
		-\conj\bfA_2 & \conj{\bfA}_1
	\end{bmatrix} \begin{bmatrix}
		\diag(\bfB_{i_2:}) & 0\\
		0 & \diag(\bfB_{i_2:})
	\end{bmatrix}
	\begin{bmatrix}
		\bfC_1^\T & \bfC^\T_2\\
		-\bfC_2^\H & \bfC^\H_1
	\end{bmatrix}.\label{eq:latSlicesAugmentedTensor}
\end{split}
\end{equation}
Let us denote by $\tens{T}_{\bbC}$ the complex-valued tensor such that its frontal slices\footnote{Note that the permutation of the second mode (real) and the third mode (quaternionic) is for ease of presentation only and does not affect the generality of the equivalence.} are $\ladjC(\bfT_{:i_2:})$ for $i_2=1, \ldots, N_2$. 
It is a structured tensor of dimensions $2N_1 \times 2N_3 \times N_2$.
The expression \eqref{eq:latSlicesAugmentedTensor} of its (now) frontal slices shows that $\tens{T}_{\bbC}$ follows a standard complex CPD model with colinearities. 
Direct calculations yield, after reorganizing terms, the expression
\begin{equation}
	\tens{T}_{\bbC} = \sum_{f=1}^F \ladjC(\bfa_f)\ladjC(\bfc_f^\top)\circ \bfb_f = \sum_{f=1}^F \ladjC(\bfa_f)\radjC^\top(\bfc_f)\circ \bfb_f,
 	\label{eq:LL1_equivalentQCPD}
\end{equation}
where $\circ$ is the usual outer product.
Eq. \eqref{eq:LL1_equivalentQCPD} shows that $\tens{T}_{\bbC}$ follows a decomposition into $F$ rank-$(2, 2, 1)$ terms \cite{de_lathauwer_decompositions_2008}.
This result is summarized in the following proposition.
\begin{proposition}
Let $\tens{T} \in \bbH^{N_1\times N_2 \times N_3}$ be a third-order tensor as defined in \Cref{def:quaternionTensor} and denote by $\tens{T}_{\bbC} \in \bbCi^{2N_1 \times 2N_3 \times N_2}$ the tensor having frontal slices given by complex adjoints of lateral slices of $\tens{T}$.
Then $\tens{T}$ follows a rank-$F$ quaternion CPD if and only if $\tens{T}_{\bbC}$ follows a structured rank-(2,2,1) decomposition of the form \eqref{eq:LL1_equivalentQCPD}.
\label{prop:equivalenceLL1_quaternionCPD}
\end{proposition}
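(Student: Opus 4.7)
The plan is to prove the two implications separately. The forward direction $(\Rightarrow)$ is essentially spelled out in the derivation \eqref{eq:lateralSliceQCPD}--\eqref{eq:LL1_equivalentQCPD} that precedes the statement, so my task there is merely to formalize that derivation. The reverse direction $(\Leftarrow)$ requires a bit more care and hinges on the injectivity of the direct complex adjoint.

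For $(\Rightarrow)$, I would start from $\tens{T} = \llbracket\bfA, \bfB, \bfC\rrbracket$ and proceed slice by slice. Applying $\ladjC$ to the lateral slice expression \eqref{eq:lateralSliceQCPD} and invoking the multiplicativity of $\ladjC$ with respect to the direct product (third bullet of \Cref{prop:complexAdjointProperties}) yields \eqref{eq:latSlicesAugmentedTensor}. Since $\bfB$ is real-valued, $\ladjC(\bfD^{(i_2)}_{\bfB})$ is block-diagonal with two copies of $\diag(\bfB_{i_2:})$, so expanding the triple matrix product column-by-column turns each slice of $\tens{T}_{\bbC}$ into $\sum_f (\bfb_f)_{i_2}\,\ladjC(\bfa_f)\ladjC(\bfc_f^\T)$; substituting $\ladjC(\bfc_f^\T) = \radjC^\top(\bfc_f)$ (fifth bullet of \Cref{prop:complexAdjointProperties}) and reassembling the slices over $i_2$ delivers the structured sum \eqref{eq:LL1_equivalentQCPD}.

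For $(\Leftarrow)$, I would assume $\tens{T}_{\bbC}$ admits a decomposition of the form \eqref{eq:LL1_equivalentQCPD} with factors $\bfa_f \in \bbH^{N_1}$, $\bfb_f \in \bbR^{N_2}$, $\bfc_f \in \bbH^{N_3}$. Extracting the $i_2$-th lateral slice of both sides and recalling that $(\tens{T}_{\bbC})_{:i_2:} = \ladjC(\bfT_{:i_2:})$ by construction of $\tens{T}_{\bbC}$ gives
\begin{equation*}
\ladjC(\bfT_{:i_2:}) \;=\; \sum_{f=1}^F (\bfb_f)_{i_2}\,\ladjC(\bfa_f)\,\radjC^\top(\bfc_f).
\end{equation*}
Using $\radjC^\top(\bfc_f) = \ladjC(\bfc_f^\T)$ together with the additivity and multiplicativity of $\ladjC$ (and the fact that the real scalar $(\bfb_f)_{i_2}$ commutes with everything), the right-hand side collapses into $\ladjC\bigl(\sum_f (\bfb_f)_{i_2}\, \bfa_f \lmul \bfc_f^\T\bigr)$. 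Because $\bfA \mapsto \ladjC(\bfA)$ is injective on quaternion matrices (it realizes the isomorphism recalled in \Cref{subsub:complex_adjoints}), cancelling $\ladjC$ on both sides yields $\bfT_{:i_2:} = \sum_f (\bfb_f)_{i_2}\, \bfa_f \lmul \bfc_f^\T$ for every $i_2$; comparing with \eqref{eq:lateralSliceQCPD}, this identifies $\tens{T}$ as the Q-CPD $\llbracket\bfA, \bfB, \bfC\rrbracket$ with columns $\bfa_f, \bfb_f, \bfc_f$.

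I expect the main obstacle to be purely a bookkeeping one: tracking \emph{which} adjoint (direct $\ladjC$ or reverse $\radjC$) decorates each factor so that the mode-1 and mode-3 pieces can be fused back into a single $\ladjC$ image before the injectivity argument is invoked. The key technical enabler is the relation $\radjC^\top(\bfc) = \ladjC(\bfc^\T)$, which reconciles the appearance of the reverse adjoint on the third-mode factor of \eqref{eq:LL1_equivalentQCPD} with the fact that \eqref{eq:lateralSliceQCPD} involves only direct quaternion products.
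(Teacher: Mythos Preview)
Your proposal is correct and follows essentially the same approach as the paper: the paper's own proof is a single sentence invoking the isomorphism between quaternion matrices (under the direct product) and their direct complex adjoints, and your argument simply unpacks that isomorphism—multiplicativity and additivity for $(\Rightarrow)$, injectivity for $(\Leftarrow)$—slice by slice.
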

\begin{proof}
	The result holds by isomorphism between the set of quaternions matrices equipped with the direct quaternion matrix product and and the set of direct complex adjoints.
\end{proof}
\Cref*{prop:equivalenceLL1_quaternionCPD} is particularly useful to establish uniqueness results for the quaternion CPD; see \Cref{sub:uniquenessQuaternionCPD} below. 

It is worth noting that the ambiguities of the decomposition \eqref{eq:LL1_equivalentQCPD} are more restrictive than those of the standard unconstrained rank-$(L, L, 1)$ decomposition \cite{de_lathauwer_decompositions_2008}.
Indeed consider such an arbitrary decomposition of a complex tensor $\tens{T} = \sum_{f=1}^F \bfU_f\bfW^\T_f\circ\bfv_f$, where $\bfU_f, \bfW_f$ have full column rank $L$, and where $\bfv_f$ are vectors.
	Then it is well known that only the low-rank $\bfU_f\bfW^\T_f$ subspaces can be identified since 
	$\bfU_f\bfW^\T_f = \bfU_f \bfR_{\bfU} (\bfV_f\bfR_{\bfV})^\T$ where $\bfR_{\bfU}, \bfR_{\bfV}\in \bbC^{L\times L}$ are such that $\bfR_{\bfU}\bfR^\T_{\bfV} = \bfI_L$. 
For the structured rank-(2,2,1) decomposition \eqref{eq:LL1_equivalentQCPD}, the situation is more restrictive with the help of the following lemma. 
\begin{lemma}\label{lemma:LL1-ambiguities}
	Let $\bfp, \tilde{\bfp} \in \bbH^M$ and $\bfq, \tilde{\bfq}\in \bbH^N$.
	Then $\ladjC(\bfp)\radjC^\top(\bfq) = \ladjC(\tilde{\bfp})\radjC^\top(\tilde{\bfq})$ if and only if there exists a non-zero quaternion $r \in \bbH$ such that $\tilde{\bfp} = \bfp r $ and $ \tilde{\bfq} = r^{-1} \bfq$.
\end{lemma}
\begin{proof}
		Using complex adjoint properties, one has $\ladjC(\bfp)\radjC^\top(\bfq) =\ladjC(\bfp)\ladjC(\bfq^\top)= \ladjC(\bfp\bfq^\top)$. Thus $\ladjC(\bfp)\radjC^\top(\bfq) = \ladjC(\tilde{\bfp})\radjC^\top(\tilde{\bfq})$ if and only if $\ladjC(\bfp\bfq^\top) = \ladjC(\tilde{\bfp}\tilde{\bfq}^\top)$. By the properties of the complex adjoint, this is equivalent to saying that $\bfp \bfq^\top = \tilde{\bfp}\tilde{\bfq}^\top$.
		The equality is satisfied if and only there exists a non-zero $r\in \bbH$ such that $\tilde{\bfp} = \bfp r$ and $\tilde{\bfq}^\top = r^{-1}\tilde{\bfq}^\top$. Since $r$ is a scalar, this equivalently means that $\tilde{\bfq} = r^{-1}\bfq$ and it concludes the proof.  
	\end{proof}
	Applying \Cref{lemma:LL1-ambiguities} to the decomposition \eqref{eq:LL1_equivalentQCPD}, one obtains that  $\ladjC(\bfa_f)\radjC^\top(\bfc_f) = \ladjC(\bfa_f)\bfR_{\bfA}(\radjC(\bfc_f)\bfR_{\bfC})^\top$ if and only if there exists a non-zero scalar $r_{f} \in \bbH$ such that $\bfR_{\bfA} = \ladjC(r_{f})$ and  $\bfR_{\bfC}^\top= \ladjC(r_{f}^{-1})$, or equivalently, $\bfR_{\bfC}= \ladjC^\top(r_{f}^{-1}) = \radjC(r_f^{-1})$.
	In other terms, the ambiguities of the rank-(2,2,1) factors are directly inherited from those of the quaternion factors of the Q-CPD. 

\subsubsection{Coupled CONFAC model}\label{subsub:secCONFAC}
Suppose again that $\tens{T} =\llbracket \bfA, \bfB, \bfC\rrbracket$ follows a rank-$F$ Q-CPD and consider its Cayley-Dickson decomposition as $\tens{T}=\tens{T}_1+\tens{T}_2\bmj$, where $\tens{T}_1, \tens{T}_2 \in \bbC_{\bmi}^{N_1\times N_2\times N_3}$.
Then, another approach to build a complex representation of the Q-CPD of $\tens{T}$ consists in modeling separately $\tens{T}_1$ and $\tens{T}_2$ and interpreting the Q-CPD as a coupled decomposition of these two components. 
 In the following we define as a constraint matrix any matrix that is full row rank and whose each column have one, and only one, non-null entry.
 We then say that a third order tensor $\tens{X} \in \bbC^{N_1\times N_2\times N_3}_{\bmi}$ admits a CONFAC decomposition \cite{favier2014overview} (or follows a CONFAC model) of rank $F_c$ if there exist three matrices,
$\bfU, \bfV$ and $\bfW$ of sizes $(N_1,L_1),(N_2,L_2),(N_3,L_3)$ respectively and three constraint matrices, $\bPsi$, $\bPhi$, $\bOm$ of sizes $(L_1,F_c),(L_2,F_c),(L_3,F_c)$ such that 
\begin{equation}\label{confac1}
		\tens{X}=\ldbrack \bfU\bPsi, \bfV\bPhi, \bfW\bOm \rdbrack.
	\end{equation}
Provided that the constraint matrices $\mathbf{\Psi}$, $\mathbf{\Phi}$, $\mathbf{\Omega}$ are known,  the factor matrices $\bfU, \bfV$ and $\bfW$ are easily identified by means of an ALS algorithm \cite{deAlmeida2008confac}.
Note that the CONFAC model is closely related to the PARALIND model \cite{Paralind}. \cref{theoconfacc} states that the Q-CPD implies that each component follows a CONFAC decomposition model.
	\begin{theorem}[Link between the Q-CPD and the complex CONFAC decomposition]\label{theoconfacc}
		Let $\tens{T}$ be a third order tensor of quaternions of size $(N_1,N_2,N_3)$. 
  Let $\ldbrack \bfA, \bfB, \bfC \rdbrack$ be its rank-$F$ Q-CPD and $\tens{T}_1$ and $\tens{T}_2$ the two $\bbCi$-valued tensors such that $\tens{T}=\tens{T}_1+\tens{T}_2\bmj$.
  Then, $\tens{T}_1$ and $\tens{T}_2$ each admits a CONFAC decomposition of rank $2F$ given by 
\begin{subequations}
	\begin{align}
		\tens{T}_1=&\ldbrack  \bfU\bPsi, \bfB\bPhi,\widebar{\bfW}\bfI_{2F}\rdbrack \label{cconfac1},\\
		\tens{T}_2=&\ldbrack  \bfU\bfI_{2F}, \bfB\bPhi,\bfW\bfI_{2F} \rdbrack \label{cconfac2},
	  \end{align}
\end{subequations}
  where  the factor matrices $\bfU$ and $\bfW$ read in terms of Cayley-Dickson factors of $\bfA$ and $\bfC$
  \begin{equation}
      \bfU= \begin{bmatrix}
				\bfA_1&\bfA_2\end{bmatrix},\quad \bfW=
			\begin{bmatrix}
				\bfC_2&\widebar{\bfC}_1 
			\end{bmatrix},
  \end{equation}
  and where the constraint matrices $\mathbf{\Psi}$ and $\mathbf{\Phi}$ are given by
  \begin{equation}\label{eq:constraint_matrices_confac}
      \mathbf{\Psi}=
			\begin{bmatrix}
				0&\bfI_F\\ -\bfI_F&0 
			\end{bmatrix},\quad  \mathbf{\Phi}=
			\begin{bmatrix}
				\bfI_F&\bfI_F
			\end{bmatrix}.
  \end{equation} 
	\end{theorem}		

	\begin{proof}
		Consider the Cayley-Dickson decomposition of quaternion factor matrices as $\bfA = \bfA_1 + \bfA_2 \bmj$ and $\bfC = \bfC_1 + \bfC_2 \bmj$.
		Then, the Q-CPD of $\tens{T} = \llbracket \bfA, \bfB, \bfC\rrbracket$ rewrites as 
		\[
		\tens{T}=\sum_{f=1}^F	(\bfa_{1f}+\bfa_{2f}\bmj)\circ \bfb_{f}\circ(\bfc_{1f}+\bfc_{2f}\bmj)
		\]
		where $\bfa_{1f}$ is the column $f$ of matrix $\bfA_1$. This leads to
		\begin{align}
			\tens{T}_1=&\sum_{f=1}^F  \bfa_{1f}\circ\bfb_{f}\circ\bfc_{1f}-
			\bfa_{2f}\circ\bfb_{f}\circ\widebar{\bfc}_{2f},\label{eqT1c}\\
			\tens{T}_2=&\sum_{f=1}^F \bfa_{1f}\circ\bfb_{f}\circ\bfc_{2f}+
			\bfa_{2f}\circ\bfb_{f}\circ\widebar{\bfc}_{1f}.\label{eqT2c}
		\end{align}		
Thus, equation \eqref{eqT1c} decomposes $\tens{T}_1$ as a sum of two CPDs and there is an unfolding of $\tens{T}_1$ of size $(N_2, N_3N_1)$, denoted $\bfT_{1}^B$ and such that
\begin{equation}
	\begin{split}
				\bfT_{1}^B&=\bfB((-\bfA_2\krb\widebar{\bfC}_2)^\T+(\bfA_1\krb\bfC_1)^\T)\\
&=\begin{bmatrix}
				\bfB&\bfB\end{bmatrix}		\begin{bmatrix}
					-\bfA_2\krb\widebar{\bfC}_2&\bfA_1\krb\bfC_1
				\end{bmatrix}^\T\\   
			&=	\begin{bmatrix}
			\bfB&\bfB\end{bmatrix}		\begin{bmatrix}\begin{bmatrix}-\bfA_2& \bfA_1\end{bmatrix}\krb\begin{bmatrix}\widebar{\bfC}_2 &\bfC_1\end{bmatrix}	\end{bmatrix}^\T. 
	\end{split}\label{eqTB1}	
    \end{equation}
Introducing $\bfU= [\bfA_1\:\bfA_2]$ and $\bfW=[\bfC_2\:\widebar{\bfC}_1]$, Equation  \eqref{eqTB1} rewrites
    \begin{equation}\label{eqT0}
			\bfT_{1}^B=\bfB\mathbf{\Phi}	(\bfU\mathbf{\Psi}\krb\widebar{\bfW})^\T 
		\end{equation}
  which is equivalent to equation \eqref{cconfac1}. 
The same reasoning holds for $\tens{T}_2$ and leads to
    \begin{equation}\label{eqT1}
			\bfT_{2}^B=\bfB\mathbf{\Phi}	(\bfU\krb\bfW)^\T 
		\end{equation}
  which is equivalent to equation \eqref{cconfac2}.
		Eventually, one can immediately check that $\mathbf{\Psi}$, $\mathbf{\Phi}$, and $\bfI_{2F}$ have full row rank and their columns have one and only one non-null entry.
	\end{proof}
 
 The system of equations \eqref{cconfac1} and \eqref{cconfac2} (or \eqref{eqT0} and \eqref{eqT1}) defines a coupled CONFAC decomposition\footnote{One can easily check that a similar reasoning applies to the four real components of $\tens{T}$ leading to a real coupled CONFAC decomposition of the four tensors.}. 
 It is worth mentioning that this decomposition was introduced by \cite[section IV.A]{deAlmeida2012cafconfac} in a totally different context.
The main advantage of this model with respect to the complex adjoint representation of equation \eqref{eq:LL1_equivalentQCPD} is that it only involves the $2N_1N_2N_3$ complex equations that are directly induced by the Q-CPD model.
It is thus more suitable for calculating 
the Q-CPD in the complex field.
Moreover, it has been shown in \cite{deAlmeida2012cafconfac} that it can be computed using a simple algorithm, from which we have drawn inspiration for the Q-CPD algorithm presented in Section \ref{qcalgo}.

In addition, we will show in \Cref{sub:uniquenessQuaternionCPD} that under some assumptions, the real factor matrix can be identified from any of the two CONFAC models involved in the coupled CONFAC decomposition.

\subsection{Uniqueness of the Q-CPD}\label{sub:uniquenessQuaternionCPD}
As explained in \Cref{sub:defQuaternionCPD}, the Q-CPD exhibits trivial scaling and permutation ambiguities, leading to the following definition of uniqueness. 

\begin{definition}[Uniqueness of the Q-CPD]\label{def:uniquenessQCPD}
	Let $\tens{T} = \llbracket \bfA, \bfB, \bfC\rrbracket$ admit a rank-$F$ Q-CPD. We say that the Q-CPD is essentially unique (or simply, unique) if the only factor matrices $\tilde{\bfA}, \tilde{\bfB}, \tilde{\bfC}$ such that $\tens{T} = \llbracket \tilde{\bfA}, \tilde{\bfB}, \tilde{\bfC}\rrbracket$ are related through
	\begin{equation}
		\tilde{\bfA} = \bfA \lmul \left(\boldsymbol{\Lambda}_{\bfA}\boldsymbol{\Pi}\right),\quad \tilde{\bfB} = \bfB\boldsymbol{\Lambda}_{\bfB}\boldsymbol{\Pi},\quad \tilde{\bfC}=\bfC\rmul \left(\boldsymbol{\Lambda}_{\bfC}\boldsymbol{\Pi}\right),
	\end{equation}
	where $\boldsymbol{\Pi} \in \bbR^{F\times F}$ is a permutation matrix, $\boldsymbol{\Lambda}_{\bfA}, \boldsymbol{\Lambda}_{\bfC}$ are quaternion-valued diagonal scaling matrices and where $\boldsymbol{\Lambda}_{\bfB}$ is a real diagonal matrix such that $(\boldsymbol{\Lambda}_{\bfA} \lmul \boldsymbol{\Lambda}_{\bfC}) \boldsymbol{\Lambda}_{\bfB} = \bfI_F$. 
\end{definition}

Now, uniqueness results for the quaternion CPD can be readily obtained through the equivalence with the structured rank-$(2,2,1)$ decomposition \eqref{eq:LL1_equivalentQCPD} (see also \Cref{prop:equivalenceLL1_quaternionCPD}).
This statement is made precise in the following proposition, which is a direct consequence of \Cref*{prop:equivalenceLL1_quaternionCPD}.
\begin{proposition}
	Let $\tens{T} \in \bbH^{N_1\times N_2 \times N_3}$ be a third-order tensor and denote by $\tens{T}_{\bbC} \in \bbCi^{2N_1 \times N_2 \times 2N_3}$ the tensor constructed from direct complex adjoints of lateral slices of $\tens{T}$. 
	Suppose that $\tens{T}$ follows a rank-$F$ quaternion CPD \eqref{eq:CPDformat_elementwise}. Then the quaternion CPD is unique if and only if the structured rank-(2,2,1) decomposition \eqref{eq:LL1_equivalentQCPD} of $\tens{T}_{\bbC}$ is unique.
	\label{prop:uniquenessEquiv}
\end{proposition}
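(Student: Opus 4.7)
The plan is to exploit the bijective correspondence set up by \Cref{prop:equivalenceLL1_quaternionCPD} between the quaternion CPDs of $\tens{T}$ and the structured rank-$(2,2,1)$ decompositions of $\tens{T}_{\bbC}$ of the form \eqref{eq:LL1_equivalentQCPD}. Under this correspondence, the biconditional reduces to checking that the trivial ambiguity groups on both sides are in one-to-one correspondence, so that essential uniqueness transfers through the isomorphism.

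For the ``only if'' direction, assume that the Q-CPD of $\tens{T}$ is essentially unique and let $\tens{T}_{\bbC} = \sum_{f=1}^F \ladjC(\tilde{\bfa}_f)\radjC^\top(\tilde{\bfc}_f)\circ \tilde{\bfb}_f$ be any alternative structured rank-$(2,2,1)$ decomposition. Assembling the columns $\tilde{\bfa}_f,\tilde{\bfb}_f,\tilde{\bfc}_f$ into factor matrices $\tilde{\bfA},\tilde{\bfB},\tilde{\bfC}$ and invoking \Cref{prop:equivalenceLL1_quaternionCPD} yields a rank-$F$ Q-CPD $\tens{T} = \llbracket\tilde{\bfA},\tilde{\bfB},\tilde{\bfC}\rrbracket$. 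By hypothesis and \Cref{def:uniquenessQCPD}, the triple $(\tilde{\bfA},\tilde{\bfB},\tilde{\bfC})$ differs from $(\bfA,\bfB,\bfC)$ only through a joint permutation $\boldsymbol{\Pi}$ and diagonal scalings $\boldsymbol{\Lambda}_{\bfA}, \boldsymbol{\Lambda}_{\bfB}, \boldsymbol{\Lambda}_{\bfC}$ satisfying $(\boldsymbol{\Lambda}_{\bfA}\lmul\boldsymbol{\Lambda}_{\bfC})\boldsymbol{\Lambda}_{\bfB} = \bfI_F$. Applying the direct complex adjoint column-wise then reproduces exactly the ambiguity structure allowed in \eqref{eq:LL1_equivalentQCPD}, establishing uniqueness of the structured rank-$(2,2,1)$ decomposition of $\tens{T}_{\bbC}$.

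For the ``if'' direction, the argument runs symmetrically: any alternative Q-CPD $\tens{T} = \llbracket\tilde{\bfA},\tilde{\bfB},\tilde{\bfC}\rrbracket$ yields, by \Cref{prop:equivalenceLL1_quaternionCPD}, an alternative structured rank-$(2,2,1)$ decomposition of $\tens{T}_{\bbC}$. By uniqueness of the latter, this decomposition may only differ from the original by the ambiguities identified in the remark following \Cref{prop:equivalenceLL1_quaternionCPD}, namely right-multiplication of $\ladjC(\bfa_f)$ and $\radjC(\bfc_f)$ by complex adjoints $\ladjC(r_{\bfA})$, $\ladjC(r_{\bfC})$ with $r_{\bfA}r_{\bfC}=1$, together with an arbitrary real rescaling of $\bfb_f$ compensated reciprocally. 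Pulling these back through the isomorphism gives exactly the ambiguities in \Cref{def:uniquenessQCPD}, hence essential uniqueness of the Q-CPD.

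The one non-routine step is the tight matching of stabilizers: permutations transfer trivially, but the scaling ambiguities require knowing that the only factorizations of $\ladjC(\bfa_f)\radjC^\top(\bfc_f)$ preserving the adjoint block structure are those parametrized by a single quaternion scalar on each side with reciprocal product. This is precisely the content of the remark after \Cref{prop:equivalenceLL1_quaternionCPD}. Combining it with the additional free real scaling of the mode-2 vector $\bfb_f$ recovers exactly the Q-CPD scaling condition $(\boldsymbol{\Lambda}_{\bfA}\lmul\boldsymbol{\Lambda}_{\bfC})\boldsymbol{\Lambda}_{\bfB}=\bfI_F$, closing the equivalence.
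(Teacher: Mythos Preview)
Your argument is correct and follows the same route as the paper: the paper's proof is the single sentence that the mapping between the factor matrices of $\tens{T}$ and those of $\tens{T}_{\bbC}$ is one-to-one, and your proposal simply unpacks this bijection and the matching of ambiguity groups in detail.
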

The proof is straightforward by observing that the mapping between the factor matrices of $\tens{T}$ and those of $\tens{T}_{\bbC}$ is one-to-one. 
Building on this result, \Cref{thm:uniquenessQCPD_kruskal} below establishes a sufficient uniqueness condition for the Q-CPD. 
\begin{theorem}
	\label{thm:uniquenessQCPD_kruskal}
	Let $\bfA \in \bbH^{N_1\times F}, \bfB \in \bbR^{N_2\times F}$ and $\bfC \in \bbH^{N_3\times F}$ be factor matrices. 
	Suppose that (at least) one of the following conditions is satisfied:
	\begin{enumerate}
		\item $\rrank{\bfA} = F$, $\lrank{\bfC} = F$ and $\bfB$ does not have proportional columns;
		\item $\krank(\bfB) =F$ and $\rkrank(\bfA) + \lkrank(\bfC) \geq F+2$;
		\item $\rkrank(\bfA) = F$ and $\krank(\bfB) +\lkrank(\bfC) \geq F+2$;
		\item $\lkrank(\bfC) = F$ and $\rkrank(\bfA) + \krank(\bfB) \geq F+2$;
		\item $N_1N_3 \geq F$ and $\rkrank(\bfA) + \krank(\bfB) + \lkrank(\bfC) \geq 2F+2$.
	\end{enumerate}
	Then the Q-CPD $\tens{T} = \llbracket \bfA, \bfB, \bfC\rrbracket$ is unique in the sense of \Cref{def:uniquenessQCPD}.
\end{theorem}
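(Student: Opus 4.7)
The plan is to reduce each of the five conditions to a known sufficient uniqueness condition for the structured complex rank-$(2,2,1)$ block-term decomposition \eqref{eq:LL1_equivalentQCPD} of $\tens{T}_{\bbC}$, then translate back through the dictionary of \Cref{prop:rankA} and \Cref{lemma:kprank_krank_quat}. By \Cref{prop:uniquenessEquiv}, essential uniqueness of $\tens{T} = \llbracket\bfA,\bfB,\bfC\rrbracket$ is equivalent to essential uniqueness of the rank-$(2,2,1)$ decomposition $\tens{T}_{\bbC} = \sum_{f=1}^F \ladjC(\bfa_f)\radjC^\top(\bfc_f)\circ \bfb_f$. Moreover, the remark following \Cref{prop:equivalenceLL1_quaternionCPD} guarantees that any ambiguity of this structured BTD corresponds exactly to a trivial Q-CPD ambiguity, so it suffices to certify uniqueness of the complex decomposition in the usual BTD sense.

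For condition 1, I would invoke the De Lathauwer sufficient condition for uniqueness of rank-$(L,L,1)$ decompositions asserting that if the block-partitioned first two factors have full column rank $2F$ and the third factor has no proportional columns, the decomposition is unique. By \Cref{prop:left_right_ranks_complex_adj}, $\rrank \bfA = F$ forces $\rank\ladjC(\bfA) = 2F$, hence $\ladjCp(\bfA) \in \bbCi^{2N_1 \times 2F}$ has full column rank; symmetrically, $\lrank\bfC = F$ yields full column rank for $\radjCp(\bfC)$. The non-proportionality of columns of $\bfB$ is exactly the statement that $\krank(\bfB) \geq 2$. For conditions 2--5, I would apply the Kruskal-type sufficient condition for rank-$(L,L,1)$ BTDs (stated in terms of $k'$-ranks of the partitioned factor matrices and $k$-rank of the third factor, as in~\cite{de_lathauwer_decompositions_2008PartI}): uniqueness holds whenever one block is ``full $k'$'' and the sum of the remaining $k'$/$k$-ranks is at least $F+2$, or the global Kruskal-like bound $\kprank(\ladjCp(\bfA)) + \krank(\bfB) + \kprank(\radjCp(\bfC)) \geq 2F+2$ is met. \Cref{lemma:kprank_krank_quat} identifies $\kprank(\ladjCp(\bfA)) = \rkrank(\bfA)$ and $\kprank(\radjCp(\bfC)) = \lkrank(\bfC)$, and substituting these equalities into the four complex sufficient conditions yields exactly the inequalities~2, 3, 4 and~5 of the theorem.

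The main obstacle I anticipate lies in condition~5, where two aspects must be checked carefully. First, the dimension requirement $N_1 N_3 \geq F$ should be matched to the standard prerequisite for the generalized Kruskal theorem to apply to the structured tensor $\tens{T}_{\bbC}$ of dimensions $2N_1 \times N_2 \times 2N_3$, accounting for the factor~$2$ in both outer dimensions. Second, one must verify that the Kruskal-type bound for the rank-$(L,L,1)$ decomposition with $L=2$ reduces, after translation via \Cref{lemma:kprank_krank_quat}, exactly to $\rkrank(\bfA) + \krank(\bfB) + \lkrank(\bfC) \geq 2F + 2$ rather than a weaker or stronger inequality. Once these two points are confirmed and the other conditions are handled by the corresponding ``one full, two partial'' variants of the Kruskal bound, the five cases follow uniformly from the reduction.
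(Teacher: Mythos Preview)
Your proposal is correct and follows essentially the same route as the paper: reduce via \Cref{prop:uniquenessEquiv} to uniqueness of the complex rank-$(2,2,1)$ decomposition of $\tens{T}_{\bbC}$, translate ranks and $k'$-ranks through \Cref{prop:left_right_ranks_complex_adj} and \Cref{lemma:kprank_krank_quat}, and then invoke the De Lathauwer sufficient conditions (the paper cites Theorems~4.1, 4.4, 4.5(a), 4.5(b) and~4.7 of \cite{de_lathauwer_decompositions_2008} for conditions~1--5 respectively, which matches your outline). The two points you flag for condition~5 are exactly the ones the paper resolves by appeal to Theorem~4.7 therein, so once you check that reference the proof is complete.
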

\begin{proof}
	The sufficient uniqueness conditions for the Q-CPD derive directly from the usual uniqueness conditions for rank-$(L, L, 1)$ decompositions \cite{de_lathauwer_decompositions_2008}, using the equivalence of \Cref{subsub:complexAdjointRep_quatCPD} and our \Cref*{lemma:kprank_krank_quat}.
	Let $\tens{T} = \llbracket \bfA, \bfB, \bfC\rrbracket$ and construct the complex adjoint tensor $\tens{T}_{\bbC}$ as in \eqref{eq:LL1_equivalentQCPD}. 
	From \Cref{prop:uniquenessEquiv} the Q-CPD of $\tens{T}$ is unique if and only if the structured rank-(2,2,1) decomposition of $\tens{T}_{\bbC}$ is unique.
	Following the approach in \cite{de_lathauwer_decompositions_2008}, remark that the horizontal stack of rank-$2$ factors simply corresponds to the direct (resp. reverse) column-wise complex adjoint \eqref{eq:definition_adjC_permuted} of the matrix $\bfA$ (resp. $\bfC$), i.e., 
	\begin{align*}
		\ladjCp(\bfA) = \begin{bmatrix}
			\ladjC(\bfa_1) & \cdots & \ladjC(\bfa_F)
		\end{bmatrix} \in \bbH^{2N_1 \times 2F},\\ 		\radjCp(\bfC) = \begin{bmatrix}
			\radjC(\bfc_1) & \cdots & \radjC(\bfc_F)
		\end{bmatrix} \in \bbH^{2N_3 \times 2F}.
	\end{align*} 
	Recall from \Cref{remark:permuted_adjc} that there exists a permutation matrix $\boldsymbol{\Pi} \in \bbR^{2F\times 2F}$ such that $\ladjCp(\bfA) = \ladjC(\bfA) \boldsymbol{\Pi}$ and $\radjCp(\bfC) = \radjC(\bfC) \boldsymbol{\Pi}$. 
	Therefore uniqueness of the rank-$(2,2,1)$ decomposition of $\tens{T}_{\bbC}$ can directly be characterized in terms of matrices $\ladjC(\bfA), \radjC(\bfC)$ and $\bfB$. 
	Moreover, from the properties of the complex adjoints we have that $\rank{\ladjC(\bfA)} =\rank{\ladjCp(\bfA)}=2F$ if and only if $\rrank{\bfA} = F$ (\Cref{prop:left_right_ranks_complex_adj}) and $\kprank(\ladjCp(\bfA))= \rkrank(\bfA)$ (by \Cref{lemma:kprank_krank_quat}). 
	Similarly for $\bfC$ one has $\rank{\radjC(\bfC)} =\rank{\radjCp(\bfC)}=2F$ if and only if $\lrank{\bfC} = F$ and $\kprank(\radjCp(\bfC))= \lkrank(\bfC)$.
	It now suffices  to apply, in order, Theorems 4.1, 4.4, 4.5(a), 4.5(b) and 4.7 \cite{de_lathauwer_decompositions_2008} to the rank-$(2,2,1)$ decomposition of $\tens{T}_{\bbC}$ to obtain conditions 1., 2., 3., 4., and 5.
\end{proof}
\begin{remark}
Condition 5 in \Cref{thm:uniquenessQCPD_kruskal} resembles the usual Kruskal uniqueness condition for the CPD in the real and complex cases, up to \emph{(i)} the necessary distinction between left and right Kruskal ranks of a quaternion matrix; and \emph{(ii) }the (usually non-restrictive) condition $N_1N_3 \geq F$, i.e., that the product of the dimensions of the two quaternion-valued factors is at least equal to the rank of the tensor. In particular, when $F \leq \max (N_1, N_3)$, the condition is automatically satisfied and Condition 5 boils down to the well-known Kruskal condition generalized to the quaternion case. 
\end{remark}
\begin{remark}
	For a generic quaternion matrix, left and right $\krank$-ranks are both equal, i.e., if $\bfA \in \bbH^{M\times F}$ is generic, then $\lkrank(\bfA) = \rkrank(\bfA) = \min(M, F)$. This results directly from the definition of complex matrix adjoints \eqref{eq:left_complexAdjointMatrix} -- \eqref{eq:right_complexAdjointMatrix} and \Cref{lemma:kprank_krank_quat}.
\end{remark}

Eventually, \cref{prop1} states that the CONFAC decomposition of only one component of the Cayley-Dickson decomposition of  $\tens{T}$ is likely to be sufficient to identify the real factor matrix of the Q-CPD but insufficient to identify the quaternion factor matrices.
	\begin{proposition}[Essential uniqueness of $\bfB$]\label{prop1}
 Let $\tens{T}$ be a third order tensor of quaternions such that $\tens{T}=\tens{T}_1+\tens{T}_2\bmj$. 
 Let $\ldbrack \bfA, \bfB, \bfC \rdbrack$ be the rank-$F$ Q-CPD of $\tens{T}$ 
		and $\ldbrack \bfU\bPsi, \bfB\bPhi, \widebar{\bfW}\bfI_{2F} \rdbrack$ the CONFAC decomposition of $\tens{T}_1$ given in  \cref{theoconfacc}.
  If $\bfB$, $\bfU$ and $\bfW$ are full column rank
  then $\bfB$ is essentially unique. 
	\end{proposition}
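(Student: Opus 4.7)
The plan is to recast the CONFAC decomposition of $\tens{T}_1$ given in \cref{theoconfacc} as a standard block-term decomposition of type rank-$(L_r,L_r,1)$ with $L_r = 2$ over $\bbCi$, and then invoke the classical essential uniqueness theorem for such decompositions from \cite{de_lathauwer_decompositions_2008}. First, starting from the explicit expansion \eqref{eqT1c}, I would group its $2F$ rank-one summands pairwise by their common mode-2 vector $\bfb_f$, obtaining
\begin{equation*}
\tens{T}_1 = \sum_{f=1}^F \bfE_f \circ_{(1,3)} \bfb_f, \qquad \bfE_f := \bfA_f^\star (\bfC_f^\star)^\T,
\end{equation*}
with $\bfA_f^\star := [\bfa_{1f} \;\; -\bfa_{2f}] \in \bbCi^{N_1 \times 2}$, $\bfC_f^\star := [\bfc_{1f} \;\; \conj{\bfc}_{2f}] \in \bbCi^{N_3 \times 2}$, and where $\circ_{(1,3)}$ denotes the outer product of the $N_1 \times N_3$ slab $\bfE_f$ (placed in modes $1$ and $3$) with the mode-2 vector $\bfb_f$. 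Up to a harmless permutation of the last two modes, this is exactly a rank-$(2,2,1)$ block-term decomposition of a complex tensor.

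Second, I would verify the three sufficient conditions of \cite[Theorem 4.1]{de_lathauwer_decompositions_2008}: (i) $[\bfA_1^\star,\ldots,\bfA_F^\star]$ has full column rank $2F$; (ii) $[\bfC_1^\star,\ldots,\bfC_F^\star]$ has full column rank $2F$; and (iii) no two columns of $\bfB$ are proportional. Condition (i) holds because $[\bfA_1^\star,\ldots,\bfA_F^\star]$ is, up to a column permutation, $[\bfA_1 \;\; -\bfA_2] = \bfU \diag(\bfI_F,-\bfI_F)$, whose rank equals that of $\bfU$; condition (ii) follows analogously because $[\bfC_1^\star,\ldots,\bfC_F^\star]$ is a column permutation of $[\bfC_1 \;\; \conj{\bfC}_2] = \conj{\bfW} \boldsymbol{\Pi}'$, whose rank equals that of $\bfW$ (conjugation preserving rank); and condition (iii) is strictly weaker than $\bfB$ being full column rank.

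Third, invoking the cited theorem I would conclude that the block-term decomposition is essentially unique, so in particular the mode-2 vectors $\{\bfb_f\}$ are determined up to a common permutation and individual nonzero scalar factors. Since $\bfB$ is real, any alternative real solution $\tilde\bfB$ necessarily satisfies $\tilde\bfb_f = \lambda_f \bfb_{\sigma(f)}$ with $\lambda_f \in \bbR \setminus \{0\}$ (the scalar ratio between two nonzero real vectors must be real), which gives the relation $\tilde\bfB = \bfB \boldsymbol{\Lambda}_\bfB \boldsymbol{\Pi}$ in the essential uniqueness sense.

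The main obstacle will be the bookkeeping required to check that the full column rank of $\bfU$ and $\bfW$ translates precisely to conditions (i)--(ii) after the column reordering, sign flips, and conjugations intrinsic to the Cayley-Dickson grouping; once this is done the argument reduces to a direct specialization of a classical result. It is worth noting that the proposition is claimed only for $\bfB$, which is consistent with block-term uniqueness recovering the rank-$2$ slabs $\bfE_f$ but not their factorizations $\bfA_f^\star (\bfC_f^\star)^\T$, leaving an intrinsic $\mathrm{GL}_2$ ambiguity per block that prevents individual recovery of $\bfA$ and $\bfC$ from $\tens{T}_1$ alone.
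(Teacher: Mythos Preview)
Your argument is correct and takes a genuinely different route from the paper. The paper proves the result from scratch via the Kruskal permutation lemma: it posits an alternative CONFAC triple $(\widetilde{\bfU},\widetilde{\bfB},\widetilde{\bfW})$, shows $\rank\widetilde{\bfB}=F$, and then verifies the lemma's hypothesis case by case ($w(\widetilde{\bfB}^\T\bfx)=0$ and $w(\widetilde{\bfB}^\T\bfx)=1$) by unwinding the Khatri--Rao/Kronecker structure until it reaches a system forcing at most one nonzero entry in $\bfB^\T\bfx$. Your approach instead observes that the $2F$ CONFAC terms group naturally into $F$ rank-$(2,2,1)$ blocks sharing the columns $\bfb_f$, and then applies \cite[Theorem~4.1]{de_lathauwer_decompositions_2008} directly. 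Your verification of the full-column-rank hypotheses via $[\bfA_1^\star,\ldots,\bfA_F^\star]\sim\bfU\diag(\bfI_F,-\bfI_F)$ and $[\bfC_1^\star,\ldots,\bfC_F^\star]\sim\conj{\bfW}$ (up to column permutation) is clean and correct. Your route is considerably shorter and more conceptual, and it is consistent with how the paper itself handles the Q-CPD uniqueness in \Cref{thm:uniquenessQCPD_kruskal}; the paper's direct Kruskal-lemma computation is more self-contained but substantially longer. One minor remark: your final step restricting the scaling to be real presupposes that the alternative $\widetilde{\bfB}$ is real, which the proposition does not explicitly state; the paper's argument (and the de~Lathauwer theorem you invoke) actually yield $\widetilde{\bfB}=\bfB\bfP\bfD$ with $\bfD$ a priori complex, and that already constitutes essential uniqueness of $\bfB$ in the CONFAC sense.
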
	
A proof is given in the Appendix. This result  also holds for  $\tens{T}_2$ and Equation \eqref{cconfac2}.

	\section{Algorithms for computing the quaternion CPD}\label{sec:algo}

Given an arbitrary tensor $\tens{T} \in \bbH^{N_1\times N_2\times N_3}$, the Q-CPD approximation problem of rank-$F$ can be formulated as the following optimization problem
\begin{equation}
	(\hat{\bfA}, \hat{\bfB}, \hat{\bfC}) = \argmin_{\bfA, \bfB, \bfC} \left\Vert \tens{T}- \llbracket \bfA, \bfB, \bfC\rrbracket\right\Vert_F^2.\label{eq:qcpd-approx}
\end{equation}
To solve \eqref{eq:qcpd-approx}, we introduce two algorithms that operate in the quaternion domain and the complex domain, respectively. 
Both algorithms rely on the well known Alternating Least Squares procedure (ALS). 
As we will show, they yield equivalent update rules; however subtle differences arise in their practical implementation, as discussed in \Cref{sec:numerical_simulations}.

	\subsection{Computing the Q-CPD in $\mathbb{H}$: the Q-ALS algorithm}

The Q-ALS algorithm extends the ALS algorithm to the Q-CPD.
It resorts to the matrix unfoldings of the Q-CPD given in equations \eqref{eq:unfold1}-\eqref{eq:unfold3}.
Thereby, at each iteration the factor matrices are updated as:
\begin{align}
		\bfA &\leftarrow \argmin_{\bfA} \;\; \lVert \bfT_{(1)} - \bfA \lmul (\bfC \krb \bfB)^\top\rVert^2_F,\label{minAh}\\
		\bfB  &\leftarrow \argmin_{\bfB} \;\; \lVert \bfT_{(2)} - \bfB  (\bfC \rkrb \bfA)^\top\rVert^2_F \ \  \text{s.t.} \ \ \bfB\text{ has real-valued entries}, \label{minBh} \\
		\bfC  &\leftarrow \argmin_{\bfC} \;\; \lVert \bfT_{(3)} - \bfC \rmul (\bfB \krb \bfA)^\top\rVert^2_F,\label{minCh}
\end{align}
where $\Vert \cdot \Vert_F$ denotes the Frobenius norm of a matrix.
We thus have to solve three different kinds of quaternion matrix least squares subproblems:
\begin{align}
    \hat{\bfX}_{\scriptscriptstyle \rhd} &= \argmin_{\bfX} \lVert \bfM - \bfX\lmul \bfN\rVert^2_F\label{eq:LSl} ,\\
    \hat{\bfX} &= \argmin_{\bfX} \lVert \bfM - \bfX \bfN\rVert^2_F \ \  \text{s.t.} \ \ \bfX\text{ has real-valued entries}  ,\label{eq:LS}\\
    \hat{\bfX}_{\scriptscriptstyle \lhd} &=\argmin_{\bfX} \lVert \bfM - \bfX\rmul \bfN\rVert^2_F\label{eq:LSr} .
\end{align}

The solution to \eqref{eq:LSl} can be found in \cite{xu_theory_2015} and is given by   $ \hat{\bfX}_{\scriptscriptstyle \rhd} = \bfM \lmul \bfN^\H \lmul \left(\bfN\lmul \bfN^\H\right)^{-1}\:.$
(Note that in this section, quaternion matrix inverses are always defined with respect to the direct quaternion matrix product and can be computed through inversion of the corresponding complex adjoint, see \cite[Theorem 4.2]{zhang_quaternions_1997}.)
The second subproblem is a constrained optimization problem since $\bfX$ is constrained to belong to the set of real-valued matrices. The optimal solution can be found in \cite[Appendix B]{flamant_quaternion_2020} and is given by 
$ \hat{\bfX} = \re{\bfM \lmul \bfN^\H} \re{ \left(\bfN\lmul \bfN^\H\right)}^{-1}\:.$
Eventually, in order to solve \eqref{eq:LSr}, we resort to the properties of direct and reverse quaternion matrix products: $\lVert \bfM - \bfX\rmul \bfN\rVert^2_F = \lVert \bfM^\T - \bfN^\T \lmul \bfX^\T\rVert^2_F$ so that the solution is given by: $ \hat{\bfX}_{\scriptscriptstyle \lhd} = \bfM \rmul \bfN^\H \rmul \left[\left(\conj{\bfN}\lmul \bfN^\T\right)^{-1}\right]^\T\:.$

As a consequence, the updates computed during one iteration of the Q-ALS algorithm are given by:
\begin{align}
 \bfA&\leftarrow\bfT_{(1)}\lmul (\conj{\bfC \krb \bfB}) \lmul \left((\bfC \krb \bfB)^\T\lmul (\conj{\bfC \krb \bfB})\right)^{-1} \label{Aupdh}\\
 \bfB&\leftarrow\re{\bfT_{(2)}\lmul (\conj{\bfC \rkrb \bfA})} \re{ (\bfC \rkrb \bfA)^\T\lmul (\conj{\bfC \rkrb\bfA})}^{-1}\label{Bupdh}\\
\bfC&\leftarrow\bfT_{(3)}\rmul (\conj{\bfB \krb \bfA}) \rmul \left(\left((\bfB \krb \bfA)^\H\lmul (\bfB \krb \bfA)\right)^{-1}\right)^\T.\label{Cupdh}
\end{align}
The Q-ALS updates resemble those of the usual ALS updates for computing the real and complex CPD. Therefore we anticipate that existing convergence results for ALS \cite{uschmajew2012local} can be adapted to the quaternion setting, exploiting recent advances in quaternion optimization \cite{flamant2021general,xu_enabling_2015,xu_theory_2015}. 
However, a precise characterization of such properties requires a dedicated study, which is left for future work.

\subsection{Computing the Q-CPD in $\mathbb{C}$: the C-ALS algorithm}\label{qcalgo}
This second approach relies on the complex coupled CONFAC representation of the Q-CPD presented in \Cref{subsub:secCONFAC}.
First, start from the Cayley-Dickson decomposition of $\tens{T} = \tens{T}_1 + \tens{T}_2 \bmj$ and recall that, if $\tens{T}$ admits a Q-CPD of rank-$F$, then from \Cref{theoconfacc} one has $\tens{T}_1 =\ldbrack  \bfU\bPsi, \bfB\bPhi,\widebar{\bfW}\rdbrack$ and $\tens{T}_2=\ldbrack  \bfU, \bfB\bPhi,\bfW \rdbrack$, where $\bfU= [\bfA_1\:\bfA_2]$ and $\bfW=[\bfC_2\:\widebar{\bfC}_1]$ are defined from Cayley-Dickson parts of $\bfA$ and $\bfC$, respectively, and where the constraint matrices $\bPsi$ and $\bPhi$ are given in \eqref{eq:constraint_matrices_confac}. 
Then one can rewrite the optimization problem \eqref{eq:qcpd-approx} as 
\begin{equation}
	(\hat{\bfU}, \hat{\bfB}, \hat{\bfW}) = \argmin_{\bfU, \bfB, \bfW} \left\Vert \tens{T}_1 - \ldbrack  \bfU\bPsi, \bfB\bPhi,\widebar{\bfW}\rdbrack\right\Vert_F^2 + \left\Vert\tens{T}_2 -  \ldbrack  \bfU, \bfB\bPhi,\bfW \rdbrack\right\Vert_F^2 .\label{eq:qcpd-approx_confac}
\end{equation}
since the Frobenius norm is preserved by the Cayley-Dickson decomposition. 
The structure of the cost function in \eqref{eq:qcpd-approx_confac} enables the derivation of ALS updates for $\bfU, \bfB$ and $\bfW$, leading to the C-ALS algorithm detailed below.

For simplicity, let us start with the estimation of the real matrix $\bfB$ given complex matrices $\bfU$ and $\bfW$. 
Denote by $\bfT_{1}^B$ and $\bfT_{2}^B$ the unfoldings of $\tens{T}_1$ and $\tens{T}_2$ along the second mode, respectively and define $\bfT^B=\begin{bmatrix}
		\bfT_{2}^B&\widebar{\bfT}_{1}^B
	\end{bmatrix}$.
	Then, exploiting unfoldings \eqref{eqT0} and \eqref{eqT1} of the coupled CONFAC model along the second mode, the $\bfB$-update optimization problem can be written as 
\begin{equation}\label{minB}
\bfB  \leftarrow \argmin_{\bfB} \;\; \lVert \bfT^B - \bfB\mathbf{\Phi}(\ladjC(\bfA)\krb \bfW)^\T\rVert^2_F \ \  \text{s.t.} \ \ \bfB\text{ has real-valued entries},
\end{equation}
which arises from the definition of $\bfU$ as a horizontal stack of Cayley-Dickson parts of $\bfA$ and simple manipulations of the unfoldings \eqref{eqT0} and \eqref{eqT1}.
The explicit solution to \eqref{minB} is given below in \eqref{Bupd}.

A similar approach is used to get the $\bfU$ and $\bfW$-updates. 
For $\bfW$, define a matrix $\bfT^C=	\begin{bmatrix}
		\bfT_{2}^C&\widebar{\bfT}_{1}^C
	\end{bmatrix}$ where $\bfT_{1}^C$ and $\bfT_{2}^C$ are the unfoldings along the third mode of $\tens{T}_1$ and $\tens{T}_2$, respectively. 
	From unfoldings of the CONFAC model, the $\bfW$-update is found as the solution to the optimization problem 
	\begin{equation}\label{minC}
\bfW  \leftarrow \argmin_{\bfW} \;\; \lVert \bfT^C - \bfW(\ladjC(\bfA)\krb \bfB\mathbf{\Phi})^\T\rVert^2_F.
\end{equation}
For $\bfU$, similar but slightly more tedious manipulations yield to 
\begin{equation}\label{minA}
\bfU  \leftarrow \argmin_{\bfU} \;\; \lVert \bfT^A - \bfU(\radjC(\bfC)\krb \bfB\mathbf{\Phi})^\T\rVert^2_F,
\end{equation}
where this time $\bfT^A = \begin{bmatrix}
		\bfT_{1}^A&\bfT_{2}^A
	\end{bmatrix}$ gathers the unfoldings of $\tens{T}_1$ and $\tens{T}_2$ along the first mode.
Eventually, the C-ALS procedure solves the Q-CPD approximation problem of rank-$F$ \eqref{eq:qcpd-approx} thanks to the coupled CONFAC interpretation \eqref{eq:qcpd-approx_confac} by iterating the following explicit updates 
\begin{align}
	\label{Uupd}
\bfU\leftarrow \bfT^A (\conj{\radjC(\bfC)\krb \bfB\mathbf{\Phi}}) \left( (\radjC(\bfC)\krb \bfB\mathbf{\Phi})^\T  (\conj{\radjC(\bfC)\krb \bfB\mathbf{\Phi}})\right)^{-1},\\
\label{Bupd}
\bfB\leftarrow\re{\bfT^B (\conj{\ladjC(\bfA) \krb \bfW})\mathbf{\Phi}^\T} \re{ \mathbf{\Phi}\left((\ladjC(\bfA) \krb \bfW)^\T  (\conj{\ladjC(\bfA) \krb\bfW})\right)\mathbf{\Phi}^\T}^{-1},\\
\label{Wupd}
\bfW\leftarrow \bfT^C (\conj{\ladjC(\bfA)\krb \bfB\mathbf{\Phi}}) \left( (\ladjC(\bfA)\krb \bfB\mathbf{\Phi})^\T  (\conj{\ladjC(\bfA)\krb \bfB\mathbf{\Phi}})\right)^{-1}.
\end{align}
At convergence, matrices $\bfA$  and $\bfC$ are directly deduced from $\bfU$ and $\bfW$.

It is worth mentioning that, by definition of matrices $\bfU$ and $\bfW$ on the one hand, and by construction of the C-ALS cost function \eqref{eq:qcpd-approx_confac} on the other hand, the least squares problems in \eqref{minB}, \eqref{minC} and \eqref{minA} are equivalent to those in \eqref{minBh}, \eqref{minCh} and \eqref{minAh}, respectively. 
As a result, Q-ALS iterations \eqref{Aupdh}-- \eqref{Cupdh} and C-ALS iterations \eqref{Uupd}--\eqref{Wupd} are one-to-one as well. 
In other words, the Q-ALS and C-ALS algorithms provide equivalent algorithms to solve the Q-CPD approximation problem \eqref{eq:qcpd-approx} of rank-$F$, operating in the quaternion and complex domain, respectively. 
Q-ALS updates enjoy somewhat simpler explicit expressions by leveraging directly the Q-CPD model; however practical implementation requires the use of dedicated quaternion linear algebra librairies. 
In contrary, the C-ALS algorithm can directly take advantage of standard (complex) linear algebra libraries. 
These subtle implementation differences are exemplified in the next section.

\section{Numerical simulations}
\label{sec:numerical_simulations}
\begin{figure*}[t]
    \centering
    \vspace*{-2em}
    \includegraphics[width=.9\textwidth]{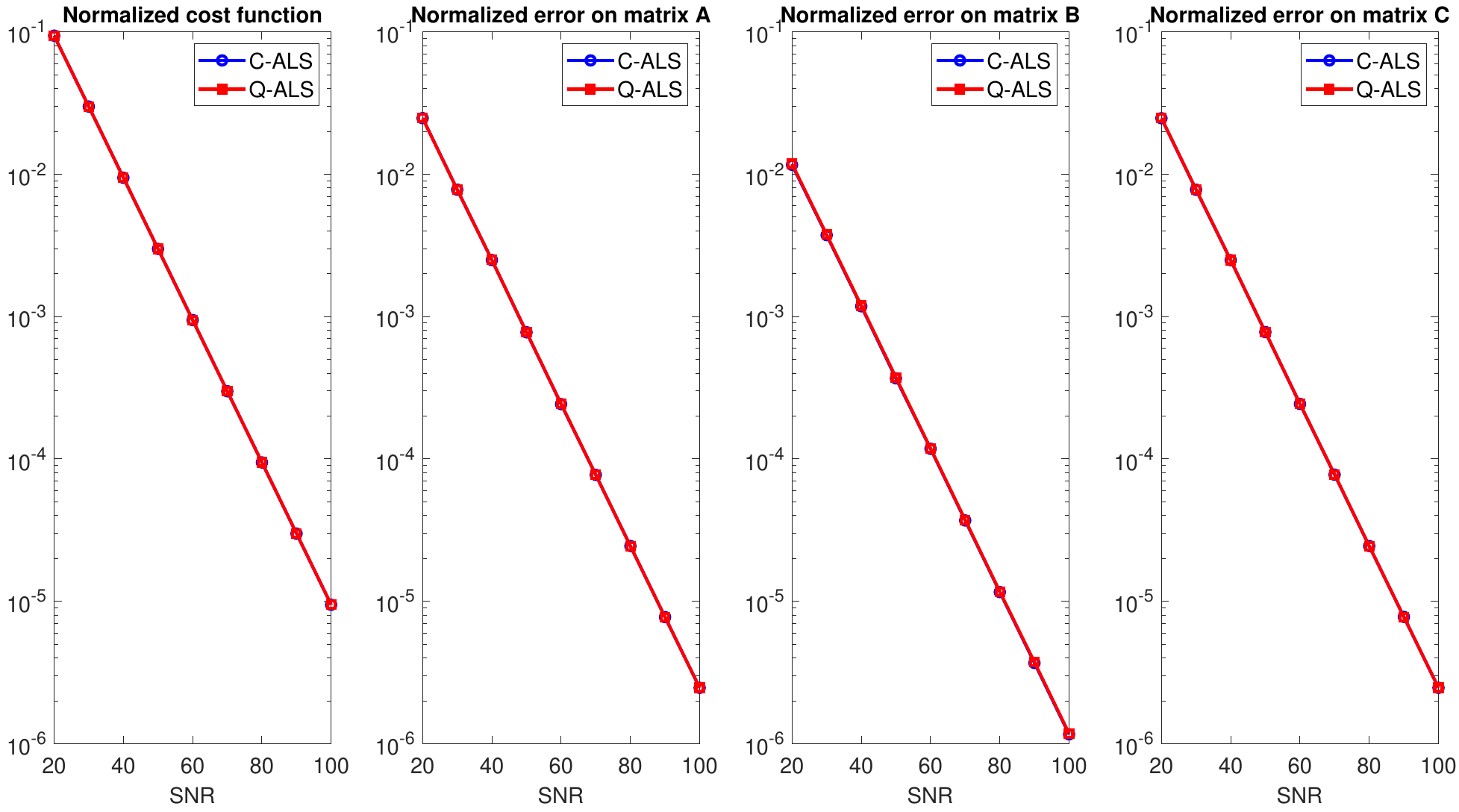}

    \caption{Median value of the cost function and median NMSE on the factor matrices with respect to the SNR. }\label{fig1}
   \end{figure*}

The purpose of the following numerical simulations is twofold. We thus want to : \textit{i)}  verify that  the Q-CPD can be computed effectively ;  \textit{ii)} assess the performances of the proposed algorithms in terms of convergence speed and accuracy in a simple but classical scenario. 
To this purpose, we consider here the Q-CPD decomposition of 500 third order tensors built according to the Q-CPD model  \eqref{eq:CPDformat_elementwise}, with $F=5$ and $N_1=N_2=N_3=10$.
The entries of the three factor matrices were drawn randomly  according to a normal distribution and are different for each tensor. 
Before computing the Q-CPD, an additive white Gaussian noise was applied to each tensor. 
The noise powers were set in order to obtain the desired Signal to Noise Ratio (SNR).
More complex scenarios involving correlated factors, higher ranks, higher order tensors and further comparison between the algorithms  deserve a specific study and are not in the scope of this paper.
All simulations were performed using \textsc{Matlab} environment. The Q-ALS algorithm has been programmed using the QTFM toolbox\footnote{\url{https://nl.mathworks.com/matlabcentral/fileexchange/76158-quaternion-toolbox-for-matlab}}.
Both Q-ALS and C-ALS algorithms were initialized with  the same (random) values and were stopped when the relative deviation between two consecutive values of the cost function fell under $10^{-8}$ with a maximum of 500 iterations.
At the end of each run, after removing the scaling and permutation ambiguities, the relative root mean-squared errors between the actual and estimated factor matrices were computed for both algorithms.
Eventually, the median values of these errors and of the relative cost function were calculated over the 500 tensors realizations.
This scenario was repeated for different SNR. Results are plotted on Figure \ref{fig1}.

Three main conclusions can be drawn.
First, as expected, the curves corresponding to the two algorithms overlap perfectly.
 Second, the errors decrease linearly with the noise power. This behavior is similar to the standard CPD algorithms used for real or complex tensors. Third,  the error values are small enough to consider that  Q-CPD decomposition  can actually be computed, which validates the possibility of using  this new tensor model in practical applications.
In order to illustrate the practical interest of C-ALS, we also computed for each algorithm the average running time, running time per iteration and number of iterations. 
The average running times for C-ALS and Q-ALS were about $0.078s$ and $0.134s$ respectively. As both algorithms compute the same updates at each iteration (and therefore reach the same number of iterations), this difference is entirely explained by the average running time per iteration: about $0.0016s$ and $0.0027s$ for C-ALS and Q-ALS respectively and by the QTFM toolbox functions, which are not as optimized as native \textsc{Matlab} functions.
These observations hold for all the considered SNR.
Eventually the same experiments were run for $F=10$, leading to similar conclusions (plots not shown).

\section{Conclusion}
\label{sec:concluson}
In this paper, we rigorously extended the notion of tensors to quaternion algebra. The key insight is that a quaternion tensor can be understood as a peculiar type of multilinear form, an $\bbH\bbR$-multilinear form, which preserves the fundamental multilinearity properties expected of such mathematical objects.
Building on this foundation, we extended the Tucker and Canonical Polyadic Decompositions (CPD) to quaternion tensors. In particular, we showed the equivalence of the quaternion CPD (Q-CPD) to a specific complex-valued rank-$(2,2,1)$ decomposition and established sufficient conditions for its uniqueness. These conditions rely on a careful handling of left and right linear independence in the quaternion domain, through the notion of left and right ranks, and left and right Kruskal ranks which are defined herein.
Additionally, we derived two ALS-based algorithms for computing the Q-CPD, one operating in the quaternion domain and the other in the complex domain, demonstrating that the new decomposition can be computed efficiently.
Future work will focus on further evaluation of the Q-CPD algorithms and on exploring their potential applications in data analysis and related fields.

\section*{Acknowledgments}
The authors would like to thank the anonymous reviewer for their valuable comments and constructive suggestions, which have greatly improved the quality and clarity of this manuscript.

\appendix
\section{Proof of \Cref{prop1}}
\label{app:technical}
Let us first define $w(\bfu)$ as the number of non zero entries of the complex vector $\bfu$ and recall the Kruskal permutation lemma \cite{KRUSKAL197795}.
\begin{lemma}[Kruskal permutation lemma]\label{kpl}
Let $\bfB$ and $\widetilde{\bfB}$ be two (real or complex) matrices of the same size and assume that $k_\bfB \geq 2$ (no colinear columns). Let $F$ denote the number of columns.
If for any complex vector $\bfx$ such that $w(\widetilde{\bfB}^\H\bfx)\leq F-\rank{\widetilde{\bfB}}+1$ we have  $w(\bfB^\H\bfx)\leq w(\widetilde{\bfB}^\H\bfx)$ then there exists a non singular diagonal matrix $\bfD$ and a permutation matrix $\bfP$ so that $\widetilde{\bfB}=\bfB\bfP\bfD$.
\end{lemma}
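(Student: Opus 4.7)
The plan is to apply the Kruskal permutation lemma (Lemma~\ref{kpl}) to the real matrix $\bfB$ paired with any alternative real factor $\widetilde{\bfB}$ arising from a second CONFAC decomposition $\ldbrack\widetilde{\bfU}\bPsi,\widetilde{\bfB}\bPhi,\widetilde{\widebar{\bfW}}\rdbrack$ of $\tens{T}_1$ satisfying the same full column rank hypotheses. Two inputs are needed to invoke the lemma: the column span identity $\Span(\widetilde{\bfB})=\Span(\bfB)$, which produces a real invertible intertwiner $\bfR$ with $\widetilde{\bfB}=\bfB\bfR$, and the implication $w(\widetilde{\bfB}^{\T}\bfx)\le 1 \Rightarrow w(\bfB^{\T}\bfx)\le w(\widetilde{\bfB}^{\T}\bfx)$. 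Both rest on a single rank identity for the rank-$2$ matrices $\bfE_f := \bfa_{1f}\bfc_{1f}^{\T}-\bfa_{2f}\widebar{\bfc}_{2f}^{\T}$ that emerge when the $2F$ rank-$1$ summands of the CONFAC model are grouped by their shared mode-$2$ factor $\bfb_f$; entrywise the grouping reads $\tenselem{T}_{1,i_1 i_2 i_3}=\sum_{f=1}^F [\bfb_f]_{i_2}[\bfE_f]_{i_1 i_3}$.

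First, for any $\alpha\in\bbC^F$ a direct expansion gives
\[
\sum_{f=1}^F \alpha_f\bfE_f \;=\; \bfU \begin{bmatrix}\diag(\alpha) & \bm 0\\ \bm 0 & -\diag(\alpha)\end{bmatrix} [\bfC_1,\widebar{\bfC}_2]^{\T}.
\]
Here $\bfU=[\bfA_1,\bfA_2]$ has full column rank $2F$ by hypothesis, and $[\bfC_1,\widebar{\bfC}_2]$ is a column permutation of $\widebar{\bfW}$ which has the same rank as $\bfW$ and is therefore also full column rank $2F$. Rank cancellation through these full-column-rank outer factors yields $\rank(\sum_f \alpha_f \bfE_f)=2w(\alpha)$, where $w(\cdot)$ counts nonzero entries. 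Specialised to $w(\alpha)=F$, this shows that $\bfE:=[\vecop{\bfE_1}\mid\cdots\mid\vecop{\bfE_F}]$ has full column rank $F$, so the mode-$2$ unfolding $(\bfT_1)_{(2)}=\bfB\bfE^{\T}$ has column span equal to $\Span(\bfB)$. Running the identical computation with the tilded quantities and using the full column rank of $\widetilde{\bfU},\widetilde{\bfW},\widetilde{\bfB}$ gives $\Span(\widetilde{\bfB})=\Span(\bfB)$. Reality of $\bfB$ and $\widetilde{\bfB}$, combined with their full column rank, then restricts the intertwiner $\bfR$ with $\widetilde{\bfB}=\bfB\bfR$ to a real invertible matrix.

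Next I verify the hypothesis of Lemma~\ref{kpl}. Since $\bfB$ has full column rank $F\ge 2$ (the $F=1$ case being trivial), $k_{\bfB}\ge 2$. Pick $\bfx\in\bbR^{N_2}$ with $w(\widetilde{\bfB}^{\T}\bfx)\le F-\rank(\widetilde{\bfB})+1 = 1$. If $w(\widetilde{\bfB}^{\T}\bfx)=0$, orthogonality to $\Span(\widetilde{\bfB})=\Span(\bfB)$ forces $w(\bfB^{\T}\bfx)=0$. If $w(\widetilde{\bfB}^{\T}\bfx)=1$ with unique nonzero entry at index $g$, the mode-$2$ contraction $\tens{T}_1\times_2\bfx^{\T}$, computed through the tilded decomposition, equals a nonzero scalar multiple of $\widetilde{\bfE}_g$, which has rank at most $2$. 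Computed through the untilded decomposition the same contraction equals $\sum_f (\bfB^{\T}\bfx)_f\bfE_f$, whose rank is $2 w(\bfB^{\T}\bfx)$ by the rank identity above. Therefore $2w(\bfB^{\T}\bfx)\le 2$, hence $w(\bfB^{\T}\bfx)\le 1 = w(\widetilde{\bfB}^{\T}\bfx)$. Lemma~\ref{kpl} now delivers a permutation matrix $\bfP$ and a nonsingular diagonal $\bfD$ with $\widetilde{\bfB}=\bfB\bfP\bfD$, which is exactly the essential uniqueness of $\bfB$.

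The principal obstacle is establishing the rank identity $\rank(\sum_f \alpha_f\bfE_f)=2w(\alpha)$: it is the single structural input that both secures the column-span equality (hence the existence of the real invertible $\bfR$) and generates the rank-$\le 2$ comparison triggering Kruskal's inequality. Its proof reduces to checking that the block-diagonal middle factor has rank $2w(\alpha)$ and that this rank is preserved on both sides by the full-column-rank outer factors $\bfU$ and $[\bfC_1,\widebar{\bfC}_2]$. A secondary bookkeeping issue is reconciling the unfolding conventions of Section~\ref{subsub:secCONFAC} with the mode-$2$ contraction notation used above, but this is routine.
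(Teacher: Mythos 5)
Your proposal does not prove the statement it was assigned. The statement is the Kruskal permutation lemma itself (\Cref{kpl}), which the paper recalls from \cite{KRUSKAL197795} without proof; your text opens with ``the plan is to apply the Kruskal permutation lemma'' and then uses it as a black box to derive the essential uniqueness of $\bfB$, i.e., \Cref{prop1}. As an argument for the lemma this is circular --- the conclusion is among the hypotheses --- and what you actually establish is a different result. A genuine proof of the lemma would have to reproduce Kruskal's combinatorial argument (or one of its later streamlined variants), relating the weight condition $w(\bfB^\H\bfx)\leq w(\widetilde{\bfB}^\H\bfx)$ to a column-matching between $\bfB$ and $\widetilde{\bfB}$; none of that machinery appears in your write-up.

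For what it is worth, read as a proof of \Cref{prop1} your argument runs parallel to the paper's appendix proof: both verify the lemma's hypothesis by splitting into the cases $w(\widetilde{\bfB}^\T\bfx)=0$ and $w(\widetilde{\bfB}^\T\bfx)=1$, and both reduce the second case to a rank bound on a linear combination of the rank-$2$ blocks. Your packaging of the key step as the identity $\rank\bigl(\sum_f \alpha_f\bfE_f\bigr)=2\,w(\alpha)$, obtained by sandwiching a block-diagonal middle factor between full-column-rank outer factors, is cleaner than the paper's Khatri--Rao/pseudo-inverse manipulation. But note one substantive discrepancy: you assume the alternative factors $\widetilde{\bfU},\widetilde{\bfW}$ are also full column rank in order to get $\Span(\widetilde{\bfB})=\Span(\bfB)$, whereas \Cref{prop1} only hypothesizes full column rank of $\bfU,\bfW,\bfB$; the paper instead deduces $\rank\widetilde{\bfB}=F$ directly from the rank of the common unfolding, which avoids that extra assumption. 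None of this, however, repairs the central problem that the assigned lemma is assumed rather than proven.
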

We can now start the proof of proposition \ref{prop1}.
\begin{proof}  
Suppose that there exists an alternative solution of the CONFAC decomposition of $\tens{T}_1$ given by ($\widetilde{\bfU}$, $\widetilde{\bfB}$, $\widetilde{\bfW}$). Then, we have $(\bfT_{1}^B)^\T=		(\widetilde{\bfU}\mathbf{\Psi}\krb\widebar{\widetilde{\bfW}})\mathbf{\Phi}^\T\widetilde{\bfB}^\T=(\bfU\mathbf{\Psi}\krb\widebar{\bfW})\mathbf{\Phi}^\T\bfB^\T.$
From the full column rank assumptions, we immediately verify that 
$\rank{(\bfU\mathbf{\Psi}\krb\widebar{\bfW})\mathbf{\Phi}^\T\bfB^\T}=\rank{\bfB}=F= \rank{(\widetilde{\bfU}\mathbf{\Psi}\krb\widebar{\widetilde{\bfW}})\mathbf{\Phi}^\T	\widetilde{\bfB}^\T}$. Therefore $\rank{\widetilde{\bfB}}\geq F$. Since $\rank{\widetilde{\bfB}}\leq F$ by construction, $\rank{\widetilde{\bfB}}= F$. As a consequence, the
 condition in the lemma becomes simply: for any $\bfx$ such that $w(\widetilde{\bfB}^\T\bfx)\leq 1$ we have  $w(\bfB^\T\bfx)\leq w(\widetilde{\bfB}^\T\bfx)$.
 
If $w(\widetilde{\bfB}^\T\bfx)=0$ then $	(\bfU\mathbf{\Psi}\krb\widebar{\bfW})\mathbf{\Phi}^\T\bfB^\T\bfx=\textbf{0}$. $\bfU\mathbf{\Psi}$ and $\bfW$ being full column rank, $\bfU\mathbf{\Psi}\krb\widebar{\bfW}$ is also full column rank, thus we necessarily have $\bfB^\T\bfx=\textbf{0}$ so that $w(\bfB^\T\bfx)=0=w(\widetilde{\bfB}^\T\bfx)$. The condition in the lemma is verified.

Let us now suppose that $w(\widetilde{\bfB}^\T\bfx)=1$, there exists $i$, $1\leq i\leq F$, so that $\widetilde{\bfB}^\T\bfx=\bfe_i$ where $\bfe_i$ is the $i^{th}$ unit vector of size $F$. 
Denoting $\bfz=\bfB^\T\bfx$ and $\bfM^+$ the Moore-Penrose pseudo-inverse of $\bfM$, we have:
\begin{align}
(\widetilde{\bfU}\mathbf{\Psi}\krb\widebar{\widetilde{\bfW}})\mathbf{\Phi}^\T\bfe_i&=&	(\bfU\mathbf{\Psi}\krb\widebar{\bfW})\mathbf{\Phi}^\T\bfz	\\
(\widetilde{\bfU}\mathbf{\Psi}\kron\widebar{\widetilde{\bfW}})(\bfI_{2F}\krb \bfI_{2F})\mathbf{\Phi}^\T\bfe_i&=&	(\bfU\mathbf{\Psi}\kron\widebar{\bfW})(\bfI_{2F}\krb \bfI_{2F})\mathbf{\Phi}^\T\bfz\\
(\bfU\mathbf{\Psi}\kron\widebar{\bfW})^{+}(\widetilde{\bfU}\mathbf{\Psi}\kron\widebar{\widetilde{\bfW}})(\bfI_{2F}\krb \bfI_{2F})\mathbf{\Phi}^\T\bfe_i&=&	(\bfI_{2F}\krb \bfI_{2F})\mathbf{\Phi}^\T\bfz\\
(\mathbf{\Psi}^{+}\bfU^{+}\widetilde{\bfU}\mathbf{\Psi}\kron\widebar{\bfW}^{+}\widetilde{\bfW})(\bfI_{2F}\krb \bfI_{2F})\mathbf{\Phi}^\T\bfe_i&=&	(\bfI_{2F}\krb \bfI_{2F})\mathbf{\Phi}^\T\bfz.
\end{align}
We then define the two non-singular matrices of size $2F \times 2F$:
$\bfR=\mathbf{\Psi}^{+}\bfU^{+}\widetilde{\bfU}\mathbf{\Psi}$
and $\bfS=\widebar{\bfW}^{+}\widetilde{\bfW}$. It yields
\begin{equation}\label{eqinte}
	(\bfR \krb \bfS)\mathbf{\Phi}^\T\bfe_i=(\bfI_{2F}\krb \bfI_{2F})\mathbf{\Phi}^\T\bfz.
\end{equation}  
On the one hand
\begin{eqnarray}\label{eqg}
(\bfR \krb \bfS)\mathbf{\Phi}^\T\bfe_i&=&\bfr_i \kron \bfs_i+\bfr_{f+i} \kron \bfs_{N+i}\\
&=&\vecop{\bfs_i\bfr_i^\T  +\bfs_{N+i}\bfr_{N+i}^\T }
\end{eqnarray} 
where $\bfr_i$ and $\bfs_i$ denote the $i^{th}$ columns of $\bfR$ ans $\bfS$ respectively.

On the other hand
\begin{eqnarray}\label{eqd}
(\bfI_{2F}\krb \bfI_{2F})\mathbf{\Phi}^\T\bfz&=&\sum_{f=1}^F z_f\left(\begin{pmatrix}\bfe_f \\ \bf0\end{pmatrix} \kron \begin{pmatrix}\bfe_f \\ \bf0\end{pmatrix} + \begin{pmatrix}\bf0 \\ \bfe_f\end{pmatrix} \kron \begin{pmatrix}\bf0 \\ \bfe_f\end{pmatrix} \right)\\
&=&\vecop{\begin{pmatrix}\Diag{\bfz} & \bf0 \\  \bf0 & \Diag{\bfz}\end{pmatrix}}.
\end{eqnarray} 
So that equation \eqref{eqinte} rewrites
 \begin{equation}\label{eqfinal}
	\bfs_i\bfr_i^\T  +\bfs_{F+i}\bfr_{F+i}^\T=\begin{pmatrix}\Diag{\bfz} & \bf0 \\  \bf0 & \Diag{\bfz}\end{pmatrix}.
\end{equation} 
Eventually, we just need to consider the two upper blocks. So that defining
\begin{eqnarray}
    \bfr_i^0&=&\begin{pmatrix}R_{1,i}, \cdots, R_{F,i}\end{pmatrix}^\T\\
    \bfr_{F+i}^0&=&\begin{pmatrix}R_{1,F+i}, \cdots, R_{F,F+i}\end{pmatrix}^\T\\
    \bfr_i^1&=&\begin{pmatrix}R_{F+1,i}, \cdots, R_{2F,i}\end{pmatrix}^\T\\
    \bfr_{F+i}^1&=&\begin{pmatrix}R_{F+1,F+i}, \cdots, R_{2F,F+i}\end{pmatrix}^\T\\
       \bfs_i^0&=&\begin{pmatrix}S_{1,i}, \cdots, S_{F,i}\end{pmatrix}^\T\\
       \bfs_{F+i}^0&=&\begin{pmatrix}S_{1,F+i}, \cdots, S_{F,F+i}\end{pmatrix}^\T
\end{eqnarray}
yields
 \begin{eqnarray}
     \bfs_i^0(\bfr_i^1)^\T+\bfs_{F+i}^0(\bfr_{F+i}^1)^\T&=&\bf0 \label{eqs1}\\
      R_{f,i}\bfs_i^0 + R_{f,F+i} \bfs_{F+i}^0&=&z_f\bfe_f, \ \forall f=1,\cdots, F. \label{eqs2}
 \end{eqnarray}  
Thus there exists $\alpha \in \bbC$ such that 
\begin{equation}
    (R_{f,i}+\alpha)\bfs_i^0 =z_f\bfe_f, \ \forall f=1,\cdots, F.
\end{equation}
We immediately deduce that there is at most one $f$ verifying $z_f\neq 0$. Thereby,
$w(\bfB^\T\bfx) \leq 1$. Hence $w(\bfB^\T\bfx) \leq w(\widetilde{\bfB}^\T\bfx)$
and, again, the condition in the permutation lemma is verified.
We can thus apply it and conclude the proof.
\end{proof}
	
\bibliographystyle{siamplain}\bibliography{refs}

\end{document}